\newtheorem{theorem}{Theorem}[section]
\newtheorem{corollary}[theorem]{Corollary}
\newtheorem{lemma}[theorem]{Lemma}
\newtheorem{proposition}[theorem]{Proposition}
\theoremstyle{definition}
\newtheorem{remark}[theorem]{Remark}
\newcommand{\I}{\textnormal{I}}
\newcommand{\II}{\textnormal{I\hspace*{-0.4ex}I}}
\newcommand{\III}{\textnormal{I\hspace*{-0.4ex}I\hspace*{-0.4ex}I}}
\newcommand{\Ip}{\textnormal{I\hspace*{-0.3ex}}^\prime}
\newcommand{\IIp}{\textnormal{I\hspace*{-0.4ex}I\hspace*{-0.3ex}}^\prime}
\newcommand{\IIIp}{\textnormal{I\hspace*{-0.4ex}I\hspace*{-0.4ex}I\hspace*{-0.3ex}}^\prime}
\renewcommand\footnotemark{}
\date{}
\begin{document}
    
    \title{{Self-Dual  and Complementary Dual Abelian Codes over Galois Rings}
    }
    
    \author{Somphong Jitman  and San Ling}

    \thanks{S. Jitman (Corresponding Author) is with the Department of Mathematics, Faculty of Science, Silpakorn University,
        Nakhon Pathom 73000, THAILAND (sjitman@gmail.com).}

    \thanks{S. Ling is with the Division of
        Mathematical Sciences,
        School of Physical and Mathematical Sciences, Nanyang Technological
        University,    Singapore 637371, 
        REPUBLIC of SINGAPORE (lingsan@ntu.edu.sg).}
 
    \maketitle
    
    \begin{abstract}  
    Self-dual and complementary dual cyclic/abelian  codes over finite fields form   important classes of linear codes that have been extensively  studied due to their rich algebraic structures and wide applications.  
    In this paper,   abelian codes over Galois rings are studied in terms  of the ideals  in  the  group ring ${\rm GR}(p^r,s)[G]$, where $G$ is a finite abelian group and ${\rm GR}(p^r,s)$ is a Galois ring.    Characterizations of     self-dual abelian  codes have been  given together with     necessary and sufficient conditions for the existence of    a   self-dual abelian code in ${\rm GR}(p^r,s)[G]$. A general    formula  for the number of  such self-dual codes  is established.    In the    case  where    $\gcd(|G|,p)=1$,   the number of    self-dual  abelian codes in ${\rm GR}(p^r,s)[G]$ is completely and explicitly  determined. 
    Applying  known results on cyclic codes of length $p^a$ over ${\rm GR}(p^2,s)$,   an  explicit formula for the number  of    self-dual  abelian codes in ${\rm GR}(p^2,s)[G]$ are  given, where the Sylow $p$-subgroup of $G$ is cyclic. 
     Subsequently, the characterization and  enumeration of complementary dual abelian codes   in ${\rm GR}(p^r,s)[G]$  are established. 
    The analogous results for    self-dual and complementary dual  cyclic  codes  over Galois rings are therefore obtained as  corollaries.

        \noindent \textbf{Keywords.} Abelian codes,  Galois rings,  Self-dual codes, Complementary dual codes,  Codes over rings
        \\
        \textbf{2010 AMS Classification.}  94B15, 94B05, 16A26
    \end{abstract}


\section{Introduction}

Algebraically  structured codes over finite fields with self-duality  and complementary duality are important families of linear codes  that have been extensively studied for both  theoretical and practical reasons  (see \cite{BGG2012},  \cite{BJU2018}, \cite{JLX2011}, \cite{JLLX2012}, \cite{JLS2012},  \cite{NRS2006}, \cite{W2002}, \cite{YM1994},  and references therein). Codes over finite rings have been interesting  since   it was proven that some binary non-linear codes such as the Kerdock, Preparata, and Goethal codes are the Gray images of linear codes over $\mathbb{Z}_4$ \cite{HKCSS1994}. Algebraically  structured codes such as cyclic, constacyclic, and abelian codes  have  extensively been studied over $\mathbb{Z}_{p^r}$, Galois rings, and finite chain rings in general (see \cite{DL2004},\cite{KR2003}, and references therein). 

The  characterization and  enumeration of Euclidean self-dual cyclic codes over finite fields have been established in \cite{JLX2011} and generalized to Euclidean and Hermitian self-dual abelian  codes over finite fields  in \cite{JLLX2012} and \cite{JLS2012}, respectively. Over some finite rings, a characterization of self-dual cyclic, constacyclic  and abelian codes  has been done (see, for example, \cite{BGG2012}, \cite{DL2004},\cite{KLL2008}, \cite{KLL2012}, \cite{SE2009}, and  \cite{W2002}). In \cite{BGG2012}, \cite{BSG2016}, \cite{BJU2018} and \cite{S2006},  characterization and    enumeration of  Euclidean and Hermitian self-dual    cyclic codes over  finite chain rings have been discussed.    Euclidean complementary dual cyclic codes over finite fields have been studied  in \cite{YM1994}. Recently, they have been generalized to Euclidean and Hermitian  complementary dual  abelian  codes over finite fields  in  \cite{BJU2018}. The complete characterization and enumeration of complementary dual abelian codes over finite fields have been established in the said paper.

In this paper, we focus on  abelian  codes over Galois rings ${\rm GR}(p^r,s)$, {\it i.e.,} ideals in the group ring ${\rm GR}(p^r,s)[G]$ of an abelian group $G$ over a Galios ring ${\rm GR}(p^r,s)$.  Specifically, we study  self-dual  and complementary dual abelian codes   in  ${\rm GR}(p^r,s)[G]$ with respect to the Euclidean and Hermitian inner products.  We      characterize  such self-dual abelian codes  and determine  necessary and sufficient conditions for the existence of  a   self-dual abelian code in  ${\rm GR}(p^r,s)[G]$. We give  a formula  for      the number of    self-dual abelian  codes in ${\rm GR}(p^r,s)[G]$.      Under the restriction   $i)$  $\gcd(|G|,p)=1$;   or  $ii)$    $r=2$ {and} the Sylow $p$-subgroup of $G$ is cyclic,  the numbers of  self-dual  abelian codes in ${\rm GR}(p^r,s)[G]$ are explicitly determined.    Subsequently, the characterization and  enumeration of complementary dual abelian codes   in ${\rm GR}(p^r,s)[G]$  are given. The number of  complementary dual abelian codes   in ${\rm GR}(p^r,s)[G]$ is shown to be independent of $r$ and the Sylow $p$-subgroup of $G$.
  
We note that the Hermitian duality is meaningful only   when $s$ is even.
Since we study Euclidean and Hermitian self-dual  codes in parallel, the assumption that $s$ is even is included whenever we refer to the Hermitian duality.

The paper is organized as follows. In Section~\ref{sec2}, we recall and prove some basic results for  group rings,  abelian codes, and  their duals. In Section~\ref{sec4}, we present the characterization and a general set up for the   enumeration of   self-dual abelian codes  in ${\rm GR}(p^r,s)[G]$.   The  complete enumeration of  Euclidean  and Hermitian self-dual  abelian codes in ${\rm GR}(p^r,s)[G]$ is given in the special cases where $i)$  $\gcd(p,|G|)=1$;  and $ii)$  $r=2$  and the Sylow $p$-subgroup of $G$ is cyclic.    In  Section~\ref{sec4p},   the characterization and  enumeration of complementary dual abelian codes   in ${\rm GR}(p^r,s)[G]$  are given.

\section{Preliminaries}\label{sec2}
In this section, we  recall some definitions and basic properties of abelian codes  and   prove some   results on their Euclidean and Hermitian duals.

\subsection{Abelian Codes}
For a finite commutative ring $R$ with identity and a finite abelian group $G$, written additively,  let  $R[G]$ denote the {\it group ring} of $G$ over~$R$. The elements in $ R[G]$ will be written as $\sum_{g\in G}\alpha_{{g }}Y^g $,
where $ \alpha_{g }\in R$.  The addition and the multiplication in $ R[G]$ are  given as in the usual polynomial rings over $R$ with the indeterminate $Y$, where the indices are computed additively in $G$. By convention, $Y^0=1$ is the identity of  $R$, where $0$ is the additive identity of $G$.   

An  {\em abelian code} in $R[G]$ is  defined to be an ideal of   $R[G]$.  If  $G$ is cyclic, this code becomes a  cyclic code. For this case,  an abelian   code will be referred to as a {\em cyclic code}.     It is well known that cyclic codes of length $n$ over $R$ can also be regarded as ideals in the quotient polynomial ring $R[X]/\langle X^n-1\rangle$.

From now on, we focus on the case where the ring is a Galois ring ${\rm GR}(p^r,s)$, a Galois extension of degree $s$ of an integer residue ring $\mathbb{Z}_{p^r}$.   
Let $\xi$ be an element in ${\rm GR}(p^r,s)$ that  generates a Teichm\"{u}ller set $\mathcal{T}_s$ of  ${\rm GR}(p^r,s)$. In other words, $\mathcal{T}_s=\{0,1,\xi,\xi^2,\dots,\xi^{p^{s}-2}\}$.
Then every  element  in  ${\rm GR}(p^r,s)$   has a unique $p$-adic expansion of the form
\[\alpha=\alpha_0+\alpha_1p+\dots+\alpha_{r-1}p^{r-1},\]
where $\alpha_i\in \mathcal{T}_s$ for all $i=0,1,\dots,r-1$.  If $s$ is even,  let $\bar{~}$  denote the   automorphism 
on   ${\rm GR}(p^r,s)$ defined by
\begin{align}\label{aut}
	\overline{\alpha}=\alpha_0^{p^{ {s}/{2}}}+\alpha_1^{p^{ {s}/{2}}}p+\dots+\alpha_{r-1}^{p^{{s}/{2}}}p^{r-1}.
	\end{align} 
For more details  on Galois rings, we refer the readers to \cite{W2003}.  

Assume that   $G\cong A\times P$, where $P$ is the Sylow $p$-subgroup of $G$ and $A$ is its  complementary subgroup of $P$ in $G$. Let $\mathcal{R}:= {\rm GR}(p^r,s)[A]$. 
Then the map $\Phi: {\rm GR}(p^r,s)[G] \to \mathcal{R}[P]$ given by
\begin{align*}
	\Phi(\sum_{ a\in   A}\sum_{b\in P}\alpha_{a+b}Y^ {a+b})= \sum_{b\in   P}\boldsymbol{\alpha}_b(Y)Y^b, 
\end{align*}
where 
$
\boldsymbol{\alpha}_b(Y) =	 \sum_{a\in A}\alpha_{a+b}Y^a\in \mathcal{R},
$
 is a well-known ring isomorphism.   
\begin{lemma}\label{lemma2.1}
	The map $\Phi$ induces a one-to-one correspondence between abelian codes  in ${\rm GR}(p^r,s)[G]$ and  abelian codes  in $\mathcal{R}[P]$.  
\end{lemma}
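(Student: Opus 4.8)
The plan is to leverage the statement, already recorded above, that $\Phi \colon {\rm GR}(p^r,s)[G] \to \mathcal{R}[P]$ is a ring isomorphism, together with the defining fact that an abelian code is nothing but an ideal of the ambient group ring. Granting this, the lemma is an instance of the elementary principle that a ring isomorphism carries ideals bijectively onto ideals; all the genuine content sits in the isomorphism claim itself, which we take as known.

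First I would take an abelian code $C$ in ${\rm GR}(p^r,s)[G]$, that is, an ideal, and verify that $\Phi(C)$ is an ideal of $\mathcal{R}[P]$. Additivity and injectivity of $\Phi$ make $\Phi(C)$ an additive subgroup of $\mathcal{R}[P]$. For the absorption property, pick $\Phi(c) \in \Phi(C)$ and an arbitrary $y \in \mathcal{R}[P]$; by surjectivity write $y = \Phi(x)$ for some $x \in {\rm GR}(p^r,s)[G]$, and then $y\,\Phi(c) = \Phi(x)\Phi(c) = \Phi(xc)$ by multiplicativity, which lies in $\Phi(C)$ because $xc \in C$. Hence $\Phi(C)$ is an abelian code in $\mathcal{R}[P]$. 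Since all rings in sight are commutative ($G$ is abelian and ${\rm GR}(p^r,s)$ is commutative), left and right absorption coincide, so no separate check is needed.

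Applying the identical argument to the inverse ring isomorphism $\Phi^{-1}$ shows that the preimage of any ideal of $\mathcal{R}[P]$ is an ideal of ${\rm GR}(p^r,s)[G]$. Finally I would observe that $C \mapsto \Phi(C)$ and $D \mapsto \Phi^{-1}(D)$ are mutually inverse, because $\Phi$ is a bijection, which yields the claimed one-to-one correspondence. I do not anticipate any real obstacle here: the only point demanding a moment of attention is to confirm that $\Phi$ simultaneously respects the multiplicative structure needed for absorption on both sides, and this is immediate from $\Phi$ being a ring homomorphism.
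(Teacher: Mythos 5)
Your proof is correct and follows exactly the route the paper intends: the paper states Lemma~\ref{lemma2.1} without proof, treating it as an immediate consequence of $\Phi$ being a ring isomorphism, and your write-up simply supplies the standard details (images and preimages of ideals under an isomorphism are ideals, and these operations are mutually inverse). Nothing is missing; your observation that commutativity makes one-sided absorption suffice is a fine touch, though the paper takes all of this for granted.
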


An abelian  code $\mathcal{C}$ in  ${\rm GR}(p^r,s)[G]$ is said to be {\em Euclidean self-dual} (resp., {\em Euclidean complementary dual}) if  $\mathcal{C}=\mathcal{C}^{\perp_{\rm E}}$ (resp., $\mathcal{C} \cap \mathcal{C}^{\perp_{\rm E}}=\{0\}$), where $\mathcal{C}^{\perp_{\rm E}}$ is the dual of $\mathcal{C}$ with respect to  the form
\begin{align*}
	\langle \boldsymbol{u},\boldsymbol{v}\rangle_{\rm E}:=\sum_{g\in   G} \alpha_{g}{\beta_{g}},
\end{align*}
where  $ \boldsymbol{u}=\sum_{g\in   G} \alpha_{g}Y^{g}$ and $ \boldsymbol{v}=\sum_{g\in   G} \beta_{g}Y^{g}$.

Define an {\em involution} ~$\widehat{~}$~  on $\mathcal{R}$ to be the ​${\rm GR}(p^r,s)$-module homomorphism that fixes ${\rm GR}(p^r,s)$ and   sends $Y^a$ to $Y^{-a}$ for all $a\in A$.  An abelian  code $D$ in  $\mathcal{R}[P]$ is said to be {\em ~$\widehat{~}$-self-dual} if  $D=D^{\perp\displaystyle\hspace*{-0.4ex}{~ \atop \widehat{~} }}$, where $D^{\perp\displaystyle\hspace*{-0.4ex}{~ \atop \widehat{~} }}$ is the dual of $D$ with respect to  the form 
\begin{align*}
    { \langle \boldsymbol{x},\boldsymbol{y}\rangle \displaystyle\hspace*{-0.4ex}{~ \atop \widehat{~} } 
:=\sum_{b\in   P}\boldsymbol{x}_b(Y)\widehat{\boldsymbol{y}_b(Y)},}
\end{align*}
where $\boldsymbol{x}=\sum_{b\in   P}\boldsymbol{x}_b(Y)Y^b$
 and 
$\boldsymbol{y}=\sum_{b\in   P}\boldsymbol{y}_b(Y)Y^b$.

In addition, if $s$ is even,  an abelian code $\mathcal{C}$ in ${\rm GR}(p^r,s)[G]$ is said to be {\em Hermitian self-dual}  (resp., {\em Hermitian complementary dual}) if  $\mathcal{C}=\mathcal{C}^{\perp_{\rm H}}$ (resp., $\mathcal{C}\cap \mathcal{C}^{\perp_{\rm H}}=\{0\}$), where $\mathcal{C}^{\perp_{\rm H}}$ is the dual of $\mathcal{C}$ with respect to  the form
\begin{align*}
	\langle \boldsymbol{u},\boldsymbol{v}\rangle_{\rm H}:=\sum_{g\in   G} \alpha_{g}\overline{\beta_{g}}, 
\end{align*}
where  $ \boldsymbol{u}=\sum_{g\in   G} \alpha_{g}Y^{g}$ and $ \boldsymbol{v}=\sum_{g\in   G} \beta_{g}Y^{g}$.

Define an {\em involution}~$\widetilde{~}$~on $\mathcal{R}$ to be the ​${\rm GR}(p^r,s)$-module homomorphism   that sends $\alpha$ to $\overline{\alpha}$ for all $\alpha \in {\rm GR}(p^r,s)$   and   sends $Y^a$ to $Y^{-a}$ for all $a\in A$. An abelian  code $D$ in  $\mathcal{R}[P]$ is said to be {\em $\sim$-self-dual} if  $D=D^{\perp_{\sim}}$, where $D^{\perp_{\sim}}$ is the dual of $D$ with respect to  the form 
\begin{align*}
	\langle \boldsymbol{x},\boldsymbol{y}\rangle_{\sim}:=\sum_{b\in   P}\boldsymbol{x}_b(Y)\widetilde{\boldsymbol{y}_b(Y)},
\end{align*}
where $\boldsymbol{x}=\sum_{b\in   P}\boldsymbol{x}_b(Y)Y^b$
 and 
$\boldsymbol{y}=\sum_{b\in   P}\boldsymbol{y}_b(Y)Y^b$.

Similar to the finite field case,  the following relations among the above forms    can be verified using  arguments similar to those in  \cite[Proposition 2.4]{JLLX2012} and \cite[Proposition 2.4]{JLS2012}.

\begin{lemma}
Let  $\boldsymbol{u}, \boldsymbol{v}\in {\rm GR}(p^r,s)[G]$.  Then the following statements hold.
\begin{enumerate}[$i)$]
\item  $\langle Y^{g} \boldsymbol{u}, \boldsymbol{v}\rangle_{\rm E}=0$ for all $g\in G$ if and only if $\langle Y^b\Phi(\boldsymbol{u}), \Phi(\boldsymbol{v})\rangle\displaystyle\hspace*{-0.4ex}{~ \atop \widehat{~} }=0$ for all $b\in P$.
\item   $\langle Y^{g} \boldsymbol{u}, \boldsymbol{v}\rangle_{\rm H}=0$ for all $g\in G$ if and only if $\langle Y^b\Phi(\boldsymbol{u}), \Phi(\boldsymbol{v})\rangle_{\sim}=0$ for all $b\in P$.
\end{enumerate}
\end{lemma}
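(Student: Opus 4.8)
The plan is to prove both statements by directly expanding the two bilinear forms, reading off coefficients, and matching the resulting families of scalar equations through the decomposition $G=A\oplus P$. I would write out $i)$ in full; statement $ii)$ then follows by the identical argument, since the involution $\widetilde{~}$ differs from $\widehat{~}$ only by applying the automorphism $\overline{~}$ to the ${\rm GR}(p^r,s)$-coefficients, which is exactly the difference between $\langle\cdot,\cdot\rangle_{\rm H}$ and $\langle\cdot,\cdot\rangle_{\rm E}$.

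Write $\boldsymbol{u}=\sum_{k\in G}\alpha_{k}Y^{k}$ and $\boldsymbol{v}=\sum_{k\in G}\beta_{k}Y^{k}$. First I would handle the left-hand side: reindexing $Y^{g}\boldsymbol{u}=\sum_{h\in G}\alpha_{h-g}Y^{h}$ gives
\[
\langle Y^{g}\boldsymbol{u},\boldsymbol{v}\rangle_{\rm E}=\sum_{h\in G}\alpha_{h-g}\beta_{h},
\]
so that the left condition is the family of $|G|$ scalar equations $\sum_{h\in G}\alpha_{h-g}\beta_{h}=0$, indexed by $g\in G$. For the right-hand side, I would use $\Phi(\boldsymbol{u})=\sum_{b\in P}\boldsymbol{\alpha}_{b}(Y)Y^{b}$ with $\boldsymbol{\alpha}_{b}(Y)=\sum_{a\in A}\alpha_{a+b}Y^{a}$ (and similarly for $\boldsymbol{v}$), and reindex $Y^{b_0}\Phi(\boldsymbol{u})=\sum_{b\in P}\boldsymbol{\alpha}_{b-b_0}(Y)Y^{b}$ to obtain
\[
\langle Y^{b_0}\Phi(\boldsymbol{u}),\Phi(\boldsymbol{v})\rangle\displaystyle\hspace*{-0.4ex}{~ \atop \widehat{~}}=\sum_{b\in P}\boldsymbol{\alpha}_{b-b_0}(Y)\,\widehat{\boldsymbol{\beta}_{b}(Y)}.
\]
Since $\widehat{~}$ fixes the coefficients and sends $Y^{a}\mapsto Y^{-a}$, one has $\widehat{\boldsymbol{\beta}_{b}(Y)}=\sum_{a\in A}\beta_{a+b}Y^{-a}$, and the product above is an element of $\mathcal{R}={\rm GR}(p^r,s)[A]$.

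The crucial observation is that this element of $\mathcal{R}$ vanishes if and only if all of its $A$-indexed coefficients vanish; so the single equation ``$=0$'' in $\mathcal{R}$ secretly encodes $|A|$ scalar equations. Reading off the coefficient of $Y^{c}$ for $c\in A$ and folding the $A$-index and $P$-index of each term into one group index $g'\in G$, I expect to arrive at the scalar equation $\sum_{g'\in G}\alpha_{g'+c-b_0}\beta_{g'}=0$. Thus the right condition is the family of equations indexed by $(c,b_0)\in A\times P$. Because $G=A\oplus P$, the map $(c,b_0)\mapsto g:=b_0-c$ is a bijection $A\times P\to G$, and under it this equation becomes precisely $\sum_{g'}\alpha_{g'-g}\beta_{g'}=0$, i.e. the left condition for $g$. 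Hence the two universally quantified conditions coincide. For $ii)$ the same computation applies after replacing $\widehat{~}$ by $\widetilde{~}$ and each $\beta$ by $\overline{\beta}$, producing $\sum_{g'}\alpha_{g'-g}\overline{\beta_{g'}}=0$, which is $\langle Y^{g}\boldsymbol{u},\boldsymbol{v}\rangle_{\rm H}=0$.

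The hard part here is not conceptual but bookkeeping: carrying out the reindexing so that the two families of scalar equations are matched by an \emph{honest} bijection, with nothing lost or double-counted. The two delicate points I would check carefully are (a) that the $\widehat{~}$-form (respectively $\sim$-form) takes values in $\mathcal{R}$, so that vanishing there is equivalent to the vanishing of all $|A|$ coefficients, and (b) that the index reversal introduced by the involution (the $-a$ in $Y^{-a}$) combines with the $P$-shift $b_0$ to produce exactly the sign-reversed group element $g=b_0-c$. Once the count $|A|\cdot|P|=|G|$ is made precise through $G=A\oplus P$, both equivalences drop out.
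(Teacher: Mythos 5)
Your proof is correct, and it is essentially the argument the paper itself invokes: the paper gives no proof of this lemma, deferring instead to the finite-field analogues (Proposition 2.4 of the cited works), which proceed by exactly this kind of direct expansion and coefficient matching. Your computation — reindexing $Y^{g}\boldsymbol{u}$ and $Y^{b_0}\Phi(\boldsymbol{u})$, reading off the $Y^{c}$-coefficients of the $\mathcal{R}$-valued form, and matching the two families of scalar equations via the bijection $(c,b_0)\mapsto b_0-c$ afforded by $G=A\oplus P$ — carries that argument over to the Galois ring setting correctly, including the observation that vanishing in $\mathcal{R}$ means vanishing of all $|A|$ coefficients.
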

The next corollary follows immediately. 
\begin{corollary}
Let $\mathcal{C}$ be an abelian code in ${\rm GR}(p^r,s)[G]$. Then the following statements hold.
\begin{enumerate}[$i)$]
\item  $\Phi(\mathcal{C})^{\perp\displaystyle\hspace*{-0.4ex}{~ \atop \widehat{~} }}=\Phi(\mathcal{C}^{\perp_{\rm E}})$. In particular, $\mathcal{C}$  is Euclidean self-dual  if and only if  $\Phi(\mathcal{C})$  is $\widehat{~~}$-self-dual.
\item  $\Phi(\mathcal{C})^{\perp_{\sim}}=\Phi(\mathcal{C}^{\perp_{\rm H}})$. In particular, $\mathcal{C}$  is Hermitian self-dual  if and only if  $\Phi(\mathcal{C})$  is $\sim$-self-dual.
\end{enumerate}
\end{corollary}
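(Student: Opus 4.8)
The plan is to deduce the corollary directly from the preceding lemma, using only that an abelian code is by definition an ideal and that $\Phi$ is a ring isomorphism (Lemma~\ref{lemma2.1}). The key observation is that the shifted conditions appearing in that lemma are precisely what the ideal structure lets us insert into, and later delete from, the definition of a dual. First I would record that, since any $b\in P$ decomposes as $b=0+b$ with $0\in A$, a direct computation gives $\Phi(Y^{b})=Y^{b}$; as $\Phi$ is a ring homomorphism this yields $Y^{b}\Phi(\boldsymbol{u})=\Phi(Y^{b}\boldsymbol{u})$, so $\Phi(\mathcal{C})$ is again an ideal and is closed under multiplication by the $Y^{b}$, $b\in P$.

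For part $i)$ I would fix $\boldsymbol{v}\in {\rm GR}(p^r,s)[G]$ and chase a chain of equivalences. By definition $\boldsymbol{v}\in\mathcal{C}^{\perp_{\rm E}}$ iff $\langle \boldsymbol{u},\boldsymbol{v}\rangle_{\rm E}=0$ for all $\boldsymbol{u}\in\mathcal{C}$. Because $\mathcal{C}$ is an ideal this is equivalent to $\langle Y^{g}\boldsymbol{u},\boldsymbol{v}\rangle_{\rm E}=0$ for all $\boldsymbol{u}\in\mathcal{C}$ and all $g\in G$ (the forward direction uses $Y^{g}\boldsymbol{u}\in\mathcal{C}$, the converse is the case $g=0$). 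Applying part $i)$ of the lemma to each fixed $\boldsymbol{u}$ turns this into $\langle Y^{b}\Phi(\boldsymbol{u}),\Phi(\boldsymbol{v})\rangle\displaystyle\hspace*{-0.4ex}{~ \atop \widehat{~} }=0$ for all $\boldsymbol{u}\in\mathcal{C}$ and all $b\in P$. Since $\{\Phi(\boldsymbol{u}):\boldsymbol{u}\in\mathcal{C}\}=\Phi(\mathcal{C})$ is an ideal, the analogous equivalence on this side (forward via $Y^{b}\boldsymbol{x}\in\Phi(\mathcal{C})$, converse via $b=0$) collapses the $b$-quantifier to give $\langle \boldsymbol{x},\Phi(\boldsymbol{v})\rangle\displaystyle\hspace*{-0.4ex}{~ \atop \widehat{~} }=0$ for all $\boldsymbol{x}\in\Phi(\mathcal{C})$, i.e.\ $\Phi(\boldsymbol{v})\in\Phi(\mathcal{C})^{\perp\displaystyle\hspace*{-0.4ex}{~ \atop \widehat{~} }}$. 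As $\Phi$ is a bijection, this chain yields the desired set equality $\Phi(\mathcal{C}^{\perp_{\rm E}})=\Phi(\mathcal{C})^{\perp\displaystyle\hspace*{-0.4ex}{~ \atop \widehat{~} }}$.

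The ``in particular'' statement then follows at once: $\mathcal{C}=\mathcal{C}^{\perp_{\rm E}}$ iff $\Phi(\mathcal{C})=\Phi(\mathcal{C}^{\perp_{\rm E}})=\Phi(\mathcal{C})^{\perp\displaystyle\hspace*{-0.4ex}{~ \atop \widehat{~} }}$, using injectivity of $\Phi$ in both directions. Part $ii)$ is proved by the identical argument, replacing $\langle\cdot,\cdot\rangle_{\rm E}$, $\langle\cdot,\cdot\rangle\displaystyle\hspace*{-0.4ex}{~ \atop \widehat{~} }$ and part $i)$ of the lemma by $\langle\cdot,\cdot\rangle_{\rm H}$, $\langle\cdot,\cdot\rangle_{\sim}$ and part $ii)$ of the lemma; here one keeps $s$ even so that the bar automorphism, and hence $\sim$, is defined. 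I do not expect any genuine obstacle: the only point requiring care is to invoke the ideal property on \emph{both} sides of the correspondence, so that the shifts $Y^{g}$ and $Y^{b}$ can be inserted and then removed and the lemma, which is phrased for the shifted forms, applies verbatim.
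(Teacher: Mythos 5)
Your proposal is correct and matches the paper's intended argument: the paper offers no written proof beyond ``The next corollary follows,'' precisely because the lemma is phrased with the shifts $Y^g$ and $Y^b$ so that the ideal property of $\mathcal{C}$ and $\Phi(\mathcal{C})$ (together with $\Phi(Y^b)=Y^b$ and bijectivity of $\Phi$) yields the dual equality exactly as you have written it out.
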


Therefore, to study Euclidean and Hermitian self-dual abelian codes in   ${\rm GR}(p^r,s)[G]$, it is sufficient to consider  $\widehat{~~}$-self-dual and $\sim$-self-dual  abelian codes in   $\mathcal{R}[P]$, respectively.

\subsection{Decomposition and Dualities}
Recall that $p$ represents  a prime number, $s$ is a positive integer, and $A$ is a finite abelian group such that  $\gcd(p,|A|)=1$. 

For coprime positive integers $i,j$, let ${\rm ord}_i(j)$ denote the multiplicative order of $j$ modulo $i$. For\,$a\in A$,\,denote\,by\,${\rm ord}(a)$\,the\,additive\,order\,of\,$a$\,in\,$A$. For each $a\in A$,   a {\it $p^s$-cyclotomic class}   of $A$ containing $a$  is defined to be the set
	\begin{align*}
		S_{p^s}(a):=&\{{p^{si}}\cdot a \mid i=0,1,\dots\}=\{{p^{si}}\cdot a \mid 0\leq i< {\rm ord}_{{\rm ord}(a)}(p^s) \}, 
	\end{align*}
	where $p^{si}\cdot a:= \sum_{j=1}^{p^{si}}a$ in $A$.   
A $p^{s}$-cyclotomic class $S_{p^s}(a)$ is said to be of  {\em type} $\I$    if  $a=-a$, {\em type} $\II$  if $S_{p^s}(a)=S_{p^s}(-a)$ and $a\neq -a$, or {\em type} $\III$   if $S_{p^s}(-a)\neq S_{p^s}(a)$.
If $s$ is even, a $p^{s}$-cyclotomic class $S_{p^s}(a)$ is said to be of  {\em type} $\IIp$    if $S_{p^s}(a)=S_{p^s}(-p^{ {s}/{2}}\cdot a)$ or {\em type} $\IIIp$   if $S_{p^s}(-p^{{s}/{2}}\cdot a)\neq S_q(a)$, where $-p^{{s}/{2}}\cdot a $ denotes $p^{{s}/{2}}\cdot (-a)$.

\begin{remark}\label{rem1} We have the following facts for the $p^s$-cyclotomic classes of $A$ (see \cite[Remark 2.5]{JLLX2012} and \cite[Remark 2.6]{JLS2012}).
	\begin{enumerate}
	    \item A $p^s$-cyclotomic class of   type $\I$ has cardinality one. 
		\item $S_{p^s}(0)$ is a $p^s$-cyclotomic class of both types $\I$ and $\IIp$. 
		\item If a $p^s$-cyclotomic class of type $\II$  exists, then its    cardinality is even.  Moreover, if $S_{p^s}(a)$ is a $p^s$-cyclotomic class of   type $\II$  of cardinality $2\nu$, then $-a=p^{s\nu}\cdot a$.
		\item A $p^s$-cyclotomic class of $A$ of type $\IIp$ has odd    cardinality. Moreover,  if $S_{p^s}(a)$ is a $p^s$-cyclotomic class of   type $\IIp$  of cardinality $\nu$, then $-a= p^{{s\nu}/{2}}\cdot a$ and  $-p^{{s}/{2}}\cdot a=p^{{s(\nu+1)}/{2}}\cdot a$. 
	\end{enumerate}
\end{remark}

Assume that $A$ has cardinality $m$ and exponent $M$. By the Fundamental Theorem of finite abelian groups, $A$ can be written  as a direct product  of finite cyclic groups
		$
			A=\prod_{i=1}^N\mathbb{Z}_{m_i},
		$
		where $\mathbb{Z}_{m_i}=\{0,1,\dots,m_i-1\}$ denotes  the additive cyclic group of order $m_i\geq 2 $ for all $1\leq i\leq N$. Then an element    $b\in A$ can be written as $b=(b_1,b_2,\dots,b_N)$, where $b_i\in \mathbb{Z}_{m_i}$. For each $h\in A$, let $\gamma_h :A \to \mathbb{Z}$
	  be defined by
		\begin{align}\label{formula}
			\gamma_h(b)={\sum_{i=1}^Nb_ih_i(M/m_i)},
		\end{align}
		where  the sum is a rational sum.  
 
Let $\mu$  be the order of $p^s$ modulo $M$. Denote by $\zeta$ a primitive $M$th root of unity in ${\rm GR}(p^r,s\mu)$.
 	For a given $\boldsymbol{c}=\sum_{a\in A} c_{a}Y^a\in \mathcal{R}:={\rm GR}(p^r,s)[A]$, its 
	Discrete Fourier Transform (DFT) is  $\Breve{\boldsymbol{c}}=\sum_{h\in A} \Breve{c}_hY^h$, 
	where
	\begin{align}\label{ca}
		\Breve{c}_h=\sum_{a\in A} {c}_{a} \zeta^{\gamma_h(a)}\in {\rm GR}(p^r,s\mu).
	\end{align}
Moreover, if $S_{p^s}(h)$ has cardinality $\nu$, then it is not difficult to verify that $\Breve{c}_h$ is contained in a subring of $ {\rm GR}(p^r,s\mu)$ which is isomorphic to  ${\rm GR}(p^r,s\nu)$.
 
Using this DFT, the decomposition of $\mathcal{R}:={\rm GR}(p^r,s)[A]$, where  $\gcd(p, |A|)=1$, has been given in  \cite{KR2003} in terms of the mix-radix representation of the elements in $A$.   In order to utilize   the decomposition in \cite{KR2003}  for characterizing self-dual codes,  we need to consider a suitable  rearrangement   of the terms in   the decomposition.

\subsubsection{Euclidean Case}
For the Euclidean self-duality,  we consider the rearrangement based on  the   $p^s$-cyclotomic classes of types $\I-\III$ as follows. Assume that $A$ contains $L$ $p^s$-cyclotomic classes. 
Without loss of generality, let $\{a_1, a_2, \dots, a_L\}$ be a set of representatives of the $p^s$-cyclotomic classes such that $\{a_i\mid i=1,2,\dots,t_{\I}\}$, $\{a_{t_{\I}+j} \mid j=1,2,\dots,t_{\II}\}$  and $\{a_{t_{\I}+t_{\II}+k}, a_{t_{\I}+t_{\II}+t_{\III}+k}=-a_{t_{\I}+t_{\II}+k} \mid k=1,2,\dots, t_{\III}\}$ are  sets of representatives of $p^s$-cyclotomic classes of types $\I, \II$, and $\III$, respectively, where $L=t_{\I}+t_{\II}+2t_{\III}$.  From the definition, $|S_{p^s}(a_i)|=1$ for all $i=1,2,\dots,t_{\I}$. From Remark \ref{rem1}, the order of the  $p^s$-cyclotomic classes of type $\II$ is even order. For $j=1,2,\dots,t_{\II}$, let  $2e_j$ denote the cardinality of $S_{p^s}(a_{t_{\I}+j})$.  For $k=1,2,\dots, t_{\III}$, $S_{p^s}(a_{t_{\I}+t_{\II}+k})$ and $S_{p^s}(a_{t_{\I}+t_{\II}+t_{\III}+k})$  have the same cardinality and   denote it by $f_k$.

Rearranging the terms in the decomposition    in  \cite{KR2003} based on the   $p^s$-cyclotomic classes of $A$ of types $\I-\III$,  we have 

\begin{align}\label{E1}
	\mathcal{R}\cong \left(\prod_{i=1}^{t_{\I}}{\rm GR}(p^r,s)\right)
	                                 &\times \left(\prod_{j=1}^{t_{\II}}{\rm GR}(p^r,2se_j)\right) \times \left(\prod_{k=1}^{t_{\III}}\left({\rm GR}(p^r,sf_k)\times {\rm GR}(p^r,sf_k)\right)\right), \tag{E1}
\end{align}
   where  
  ${\rm GR}(p^r,2se_j)$ is induced by $S_{p^s}(a_{t_{\I}+j})$   for all 
  $j=1,2,\dots,t_{\II}$  and   ${\rm GR}(p^r,sf_k)\times {\rm GR}(p^r,sf_k)$ is induced by $(S_{p^s}(a_{t_{\I}+t_{\II}+k}), $ $S_{p^s}(-a_{t_{\I}+t_{\II}+k}))$  for all $k=1,2,\dots,t_{\III}$.
For more details   and the explicit isomorphism, the readers may refer to \cite[Section II]{KR2003}.

It follows that 
\begin{align}\label{E2}
	\mathcal{R}[P]\cong &\left(\prod_{i=1}^{t_{\I}}{\rm GR}(p^r,s)[P]\right)
	                                	 \times \left(\prod_{j=1}^{t_{\II}}{\rm GR}(p^r,2se_j)[P]\right)\times \left(\prod_{k=1}^{t_{\III}}\left({\rm GR}(p^r,sf_k)[P]\times {\rm GR}(p^r,sf_k)[P]\right)\right). \tag{E2}
\end{align}
Therefore, by Lemma \ref{lemma2.1}, every abelian code in ${\rm GR}(p^r,s)[G]\cong \mathcal{R}[P]$ can be written in the form
\begin{align}\label{E3}
	\mathcal{C}\cong \left(\prod_{i=1}^{t_{\I}}U_i \right)\times \left(\prod_{j=1}^{t_{\II}}V_j \right) \times \left(\prod_{k=1}^{t_{\III}}(W_k\times W_k^\prime) \right), \tag{E3}
\end{align}
where $U_i$ is an abelian code in   ${\rm GR}(p^r,s)[P]$, $V_j$ is an abelian code in   ${\rm GR}(p^r,2se_j)[P]$, and $W_k,W_k^\prime$ are  abelian codes in  ${\rm GR}(p^r,sf_k)[P]$ for all $i=1,2,\dots, t_{\I}$, $j=1,2,\dots,t_{\II}$, and   $k=1,2,\dots,t_{\III}$.

The Euclidean dual of $\mathcal{C}$ in (\ref{E3}) can be viewed to be of the form
\begin{align} \label{dualE}
	\mathcal{C}^{\perp_{\rm E}}\cong \left(\prod_{i=1}^{t_{\I}} U_i^{\perp_{\rm E}}  \right)&\times \left(\prod_{j=1}^{t_{\II}} V_j ^{\perp_{\rm H}} \right)\times \left(\prod_{k=1}^{t_{\III}} \left( (W_k^\prime) ^{\perp_{\rm E}}\times  W_k^{\perp_{\rm E}}\right) \right).\tag{E4}
\end{align}
The detailed justification for  (\ref{dualE}) is provided    in Appendix \ref{SSAEC}.

 \subsubsection{Hermitian Case}
In the case where $s$ is even, we consider the other rearrangement of the  decomposition of $\mathcal{R}$ in terms of the   $p^s$-cyclotomic classes of $A$ of types $\IIp$ and $\IIIp$. Let   $\{b_1=0, b_2, \dots, b_L\}$ denote
 a set of representatives of the $p^s$-cyclotomic classes such that $\{b_j \mid j=1,2,\dots,t_{\IIp}\}$ and 
 $\{b_{t_{\IIp}+k}, b_{t_{\IIp}+t_{\IIIp}+k}=-p^{{s}/{2}}\cdot b_{t_{\IIp}+k} \mid k=1,2,\dots, t_{\IIIp} \}$     represent   $p^s$-cyclotomic classes of types $\IIp$ and $\IIIp$, respectively, where $L=t_{\IIp}+2t_{\IIIp} $.  
For $j=1,2,\dots,t_{\IIp}$, let  $\acute{e}_j$ denote the cardinality of $S_{p^s}(b_{j})$. 
 For $k=1,2,\dots, t_{\IIIp}$, $S_{p^s}(b_{t_{\IIp}+k})$ and $S_{p^s}(b_{t_{\IIp}+t_{\IIIp}+k})$  have the same cardinality and   denote it  by $\acute{f}_k$.

Rearranging the terms in the decomposition    in  \cite{KR2003} based on the   $p^s$-cyclotomic classes of $A$ of types $\IIp$ and $\IIIp$, we have 
 
\begin{align}\label{H1}
	\mathcal{R}\cong  \left(\prod_{j=1}^{t_{\IIp}}{\rm GR}(p^r,s\acute{e}_j)\right)
	\times \left(\prod_{k=1}^{t_{\IIIp}}\left({\rm GR}(p^r,s\acute{f}_k)\times {\rm GR}(p^r,s\acute{f}_k)\right)\right), \tag{H1}
\end{align}
    where   ${\rm GR}(p^r,s\acute{e}_j)$ is induced by $S_{p^s}(b_{j})$  for all $j=1,2,\dots,t_{\IIp}$     and   \  ${\rm GR}(p^r,s\acute{f}_k)\times {\rm GR}(p^r,s\acute{f}_k)$ is induced by $\left(S_{p^s}(b_{t_{\IIp}+k}), S_{p^s}(-p^{{s}/{2}}\cdot b_{t_{\IIp}+k})\right)$ for all $k=1,2,\dots,t_{\IIIp}$.

Consequently,   
\begin{align}\label{H2}
	\mathcal{R}[P]\cong& \left(\prod_{j=1}^{t_{\IIp}}{\rm GR}(p^r,s\acute{e}_j)[P]\right)  \times \left(\prod_{k=1}^{t_{\IIIp}}\left({\rm GR}(p^r,s\acute{f}_k)[P]\times {\rm GR}(p^r,s\acute{f}_k)[P]\right)\right), \tag{H2}
\end{align}
and, by Lemma \ref{lemma2.1}, every abelian code in ${\rm GR}(p^r,s)[G] \cong \mathcal{R}[P]$ can be viewed as
\begin{align}\label{H3}
	\mathcal{C}\cong  \left(\prod_{j=1}^{t_{\IIp}}E_j \right) \times \left(\prod_{k=1}^{t_{\IIIp}}(F_k\times F_k^\prime) \right), \tag{H3}
\end{align}
where $E_j$ is an abelian code   in   ${\rm GR}(p^r,s\acute{e}_j)[P]$ and $F_k,F_k^\prime$ are  abelian codes in  ${\rm GR}(p^r,s\acute{f}_k)[P]$ for all $j=1,2,\dots, t_{\IIp}$ and $k=1,2,\dots,t_{\IIIp}$.

Then the Hermitian dual of $\mathcal{C}$ in (\ref{H3}) ้has the form
\begin{align} \label{dualH}
	\mathcal{C}^{\perp_{\rm H}}\cong  \left(\prod_{j=1}^{t_{\IIp}} E_j ^{\perp_{\rm H}} \right)\times \left(\prod_{k=1}^{t_{\IIIp}} \left( (F_k^\prime) ^{\perp_{\rm E}}\times  F_k^{\perp_{\rm E}}\right) \right).\tag{H4}
\end{align}
The detailed discussion for  (\ref{dualH})  is provided  in Appendix \ref{SSAHC}.

\section{Self-Dual Abelian   Codes in ${\rm GR}(p^r,s)[G]$} \label{sec4}
In this section, we characterize and enumerate Euclidean and Hermitian self-dual abelian codes in ${\rm GR}(p^r,s)[G]$. We determine necessary and sufficient conditions for the existence of  self-dual abelian codes in   ${\rm GR}(p^r,s)[G]$  in Subsection \ref{subsec4.1} and  followed  by general results for the enumeration of such self-dual codes in Subsection \ref{subsec4.2}.  Some special  cases will be discussed in Subsections \ref{subsec4.3} and \ref{subsec4.4}.

\subsection{The Existence of Self-Dual Abelian Codes} \label{subsec4.1}
The characterizations of Euclidean and Hermitian self-dual abelian codes in ${\rm GR}(p^r,s)[G]$ are given as follows.

From (\ref{E3}) and (\ref{dualE}),  the characterization of Euclidean   self-dual abelian codes in ${\rm GR}(p^r,s)[G]$  is  given in  the next proposition.
\begin{proposition}\label{selfDuE}
Let $r$ and $s$ be positive integers. An abelian code $	\mathcal{C}$ in   ${\rm GR}(p^r,s)[ G]$ is  Euclidean   self-dual if and only if, in the decomposition (\ref{E3}), 
\begin{enumerate}[$i)$]
	\item $U_i$ is  Euclidean  self-dual    for    all $i=1,2,\dots,t_{\I}$,
    \item $V_j$ is  Hermitian  self-dual     for    all $j=1,2,\dots,t_{\II}$, and       
    \item $W_k^\prime = W_k^{\perp_{\rm E}}$    for all $k=1,2,\dots,t_{\III}$.
\end{enumerate}
\end{proposition}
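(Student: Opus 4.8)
The plan is to obtain the characterization by comparing (\ref{E3}) with (\ref{dualE}) one factor at a time, so that the single equation $\mathcal{C}=\mathcal{C}^{\perp_{\rm E}}$ decouples into independent conditions on the factors $U_i$, $V_j$, and the pairs $(W_k,W_k^\prime)$. First I would record the structural fact that makes this possible: by Lemma~\ref{lemma2.1} together with the ring isomorphism (\ref{E2}), abelian codes in ${\rm GR}(p^r,s)[G]$ correspond bijectively to tuples of codes in the factor group algebras, and in a finite direct product of rings two ideals coincide if and only if their projections onto every factor coincide. Combining the expression (\ref{E3}) for $\mathcal{C}$ with the expression (\ref{dualE}) for $\mathcal{C}^{\perp_{\rm E}}$ (justified in Appendix~\ref{SSAEC}), the identity $\mathcal{C}=\mathcal{C}^{\perp_{\rm E}}$ is therefore equivalent to the simultaneous factor-wise equalities
\[
U_i=U_i^{\perp_{\rm E}},\qquad V_j=V_j^{\perp_{\rm H}},\qquad (W_k,W_k^\prime)=\bigl((W_k^\prime)^{\perp_{\rm E}},\,W_k^{\perp_{\rm E}}\bigr),
\]
ranging over $i=1,\dots,t_{\I}$, $j=1,\dots,t_{\II}$, and $k=1,\dots,t_{\III}$. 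The first two batches are precisely conditions $i)$ and $ii)$, so only the type~$\III$ pairs require attention.

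For a fixed $k$ the third equality unpacks into the two conditions $W_k=(W_k^\prime)^{\perp_{\rm E}}$ and $W_k^\prime=W_k^{\perp_{\rm E}}$, and the remaining task is to see that these are equivalent, so that condition $iii)$ alone captures both. I would derive this from the reflexivity of the Euclidean dual, $(C^{\perp_{\rm E}})^{\perp_{\rm E}}=C$, valid for every abelian code $C$ in ${\rm GR}(p^r,sf_k)[P]$: applying $\perp_{\rm E}$ to $W_k^\prime=W_k^{\perp_{\rm E}}$ gives $(W_k^\prime)^{\perp_{\rm E}}=(W_k^{\perp_{\rm E}})^{\perp_{\rm E}}=W_k$, and the converse implication is symmetric. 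Hence the pair of equations collapses to the single statement $W_k^\prime=W_k^{\perp_{\rm E}}$, which is condition $iii)$.

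The step needing the most care is the appeal to reflexivity over the coefficient rings ${\rm GR}(p^r,sf_k)[P]$, which are not fields. Here I would use that a Galois ring is a finite chain ring, hence Frobenius, and that the group algebra of a finite group over a Frobenius ring is again Frobenius; over such rings the cardinality relation $|C|\cdot|C^{\perp_{\rm E}}|=|{\rm GR}(p^r,sf_k)[P]|$ holds and yields $(C^{\perp_{\rm E}})^{\perp_{\rm E}}=C$. The only other delicate point is the correct twisting of the factors in passing from (\ref{E3}) to (\ref{dualE})---namely that the type~$\II$ factor carries the Hermitian dual and that the two coordinates of a type~$\III$ block are interchanged---but this is exactly the content already secured in Appendix~\ref{SSAEC}, so I would cite it rather than reprove it.
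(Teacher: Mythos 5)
Your proposal is correct and follows essentially the same route as the paper, which obtains the proposition directly by comparing the factor-wise decomposition (\ref{E3}) of $\mathcal{C}$ with the decomposition (\ref{dualE}) of $\mathcal{C}^{\perp_{\rm E}}$ secured in Appendix~\ref{SSAEC}. Your additional observation---that the two type~$\III$ equations $W_k=(W_k^\prime)^{\perp_{\rm E}}$ and $W_k^\prime=W_k^{\perp_{\rm E}}$ collapse to one via double-duality over the finite Frobenius ring ${\rm GR}(p^r,sf_k)[P]$---is a correct and worthwhile elaboration of a point the paper leaves implicit.
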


The    characterization of Hermitian   self-dual abelian codes in ${\rm GR}(p^r,s)[G]$   follows immediately from (\ref{H3}) and (\ref{dualH}).
\begin{proposition}\label{selfDuH}
Let  $r$ be a positive integer and let $s$ be an  even positive  integer. Then an abelian code $	\mathcal{C}$ in  ${\rm GR}(p^r,s)[ G]$ is  Hermitian self-dual if and only if, in the decomposition~(\ref{H3}), 
\begin{enumerate}[$i)$] 
    \item $E_j$ is  Hermitian   self-dual    for   all $j=1,2,\dots,t_{\IIp}$, and       
    \item   $F_k^\prime= F_k^{\perp_{\rm E}}$ for all $k=1,2,\dots,t_{\IIIp}$.
\end{enumerate}
\end{proposition}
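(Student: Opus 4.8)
The plan is to read the proposition directly off the two displayed decompositions (\ref{H3}) and (\ref{dualH}). Since $\Phi$ together with the isomorphism in (\ref{H2}) is a ring isomorphism carrying $\mathcal{R}[P]$ onto the indicated direct product, and since abelian codes correspond to abelian codes under $\Phi$ by Lemma~\ref{lemma2.1}, both $\mathcal{C}$ and its Hermitian dual $\mathcal{C}^{\perp_{\rm H}}$ are transported to genuine ideals of the product ring. Equation (\ref{H3}) records the image of $\mathcal{C}$ and equation (\ref{dualH}) records the image of $\mathcal{C}^{\perp_{\rm H}}$. Because the decomposition isomorphism is a bijection, the self-duality $\mathcal{C}=\mathcal{C}^{\perp_{\rm H}}$ is equivalent to the equality of these two images in the product ring.

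Next I would invoke the elementary fact that an ideal of a finite direct product of rings with identity is uniquely a product of ideals, one in each factor, so that two such products coincide if and only if they coincide factor by factor. Comparing (\ref{H3}) with (\ref{dualH}) component-wise then yields, for each $j=1,\dots,t_{\IIp}$, the equation $E_j=E_j^{\perp_{\rm H}}$, that is, $E_j$ is Hermitian self-dual, which is precisely condition $i)$; and, for each $k=1,\dots,t_{\IIIp}$, the pair of equations $F_k=(F_k^\prime)^{\perp_{\rm E}}$ and $F_k^\prime=F_k^{\perp_{\rm E}}$, obtained by matching the two coordinates of the type-$\IIIp$ block $F_k\times F_k^\prime$ against $(F_k^\prime)^{\perp_{\rm E}}\times F_k^{\perp_{\rm E}}$.

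The short final step is to note that these two equations are equivalent. Euclidean duality on abelian codes in ${\rm GR}(p^r,s\acute{f}_k)[P]$ is an involution, since the Euclidean form is nondegenerate on a finite module, so $(F_k^{\perp_{\rm E}})^{\perp_{\rm E}}=F_k$. Applying $\perp_{\rm E}$ to $F_k^\prime=F_k^{\perp_{\rm E}}$ therefore returns $(F_k^\prime)^{\perp_{\rm E}}=F_k$, and conversely. Hence both equations are subsumed by the single condition $ii)$, namely $F_k^\prime=F_k^{\perp_{\rm E}}$.

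The genuine content, and hence the only real obstacle, lies not in this proposition but in the decomposition (\ref{dualH}) itself, which I would take as granted here (its justification is deferred to Appendix~\ref{SSAHC}). Conceptually, the point one must verify there is that the $\sim$-involution, namely bar-conjugation on coefficients combined with $a\mapsto -a$ on $A$, stabilizes each type-$\IIp$ component but interchanges the two components of each type-$\IIIp$ pair. This is what keeps the duality on each $E_j$-factor Hermitian while collapsing the residual duality on a single $F_k$-factor to the ordinary Euclidean one, in complete parallel with the Euclidean statement of Proposition~\ref{selfDuE}. Granting (\ref{dualH}), the argument above is purely formal.
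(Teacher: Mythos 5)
Your proposal is correct and matches the paper's approach: the paper itself states that the proposition ``follows immediately from (\ref{H3}) and (\ref{dualH})'', and your componentwise comparison of ideals in the direct product, together with the observation that $F_k=(F_k^\prime)^{\perp_{\rm E}}$ and $F_k^\prime=F_k^{\perp_{\rm E}}$ are equivalent via the double-dual property, is exactly the argument being left implicit. Deferring the justification of (\ref{dualH}) to Appendix~\ref{SSAHC} is also what the paper does, so no gap remains.
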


Necessary and sufficient conditions for the existence of Euclidean and Hermitian  self-dual abelian codes in  ${\rm GR}(p^r,s)[ G]$ are given as follows. The conditions for the Euclidean case have been  proven in \cite[Theorem 1.1]{W2002}. Here, we provide  an alternative constructive  proof.

\begin{proposition}\label{selfDEH}  Let   $r$ and $s$ be positive integers. Let  $G$ be a finite abelian group. Then
	there exists a Euclidean self-dual  abelian code  in ${\rm GR}(p^r,s)[G]$   if and only if one of the following statements holds,
	\begin{enumerate}[$i)$]
		\item $r$ is even, or
		\item $p=2$ and $|G|$ is even.
	\end{enumerate}

In addition, if $s$ is even, then  the conditions are equivalent to the existence of a Hermitian self-dual  abelian code  in ${\rm GR}(p^r,s)[G]$.
\end{proposition}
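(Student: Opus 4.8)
The plan is to prove sufficiency by exhibiting explicit self-dual ideals directly in ${\rm GR}(p^r,s)[G]$, and to prove necessity by using Propositions~\ref{selfDuE} and \ref{selfDuH} to single out one distinguished block and then invoking a parity property of self-dual codes over the chain ring ${\rm GR}(p^r,s)$. The Euclidean and Hermitian statements will be settled by literally the same arguments, which is exactly why the two existence criteria turn out to coincide.

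For sufficiency, suppose first that $r$ is even. I would check that $\mathcal C=p^{r/2}{\rm GR}(p^r,s)[G]$ is self-dual: a word $\boldsymbol v=\sum_{g}\beta_gY^g$ is Euclidean-orthogonal to all of $\mathcal C$ if and only if $p^{r/2}\beta_g=0$ for every $g$, i.e.\ $\beta_g\in p^{r/2}{\rm GR}(p^r,s)$ for all $g$, so $\mathcal C^{\perp_{\rm E}}=\mathcal C$; since the conjugation fixes $p^{r/2}{\rm GR}(p^r,s)$, the same computation gives $\mathcal C^{\perp_{\rm H}}=\mathcal C$ when $s$ is even. This settles case $i)$ for every $G$. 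Next suppose $p=2$, $|G|$ even, and (the remaining subcase) $r$ odd. As the characteristic is $2$ and $|G|$ is even, \cite{JLLX2012} (resp.\ \cite{JLS2012}) provides a Euclidean (resp.\ Hermitian) self-dual abelian code $D$ in $\mathbb F_{2^s}[G]$. Let $\rho\colon {\rm GR}(2^r,s)[G]\to\mathbb F_{2^s}[G]$ be reduction modulo $2$ and put $\mathcal C=2^{(r-1)/2}\rho^{-1}(D)$, an ideal. A direct count using $\dim_{\mathbb F_{2^s}}D=|G|/2$ gives $|\mathcal C|=2^{rs|G|/2}$, and self-orthogonality follows from $\langle 2^{(r-1)/2}\boldsymbol u,2^{(r-1)/2}\boldsymbol v\rangle=2^{\,r-1}\langle\boldsymbol u,\boldsymbol v\rangle$ (for both forms, as $\overline{2^{(r-1)/2}}=2^{(r-1)/2}$) together with the fact that $\langle\boldsymbol u,\boldsymbol v\rangle\in 2\,{\rm GR}(2^r,s)$ whenever $\rho(\boldsymbol u),\rho(\boldsymbol v)\in D$. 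Since the group ring is a Frobenius ring, $|\mathcal C|\,|\mathcal C^{\perp}|=|{\rm GR}(2^r,s)[G]|=2^{rs|G|}$, and self-orthogonality with $|\mathcal C|=2^{rs|G|/2}$ forces $\mathcal C=\mathcal C^{\perp}$.

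For necessity, suppose a Euclidean self-dual code $\mathcal C$ exists. The class $S_{p^s}(0)$ is always of type $\I$, so by Proposition~\ref{selfDuE} the corresponding block $U_1$ is a Euclidean self-dual abelian code in ${\rm GR}(p^r,s)[P]$, in particular a self-dual code of length $|P|$ over the chain ring ${\rm GR}(p^r,s)$; in the Hermitian case one takes instead the block $E_1$ attached to $b_1=0$ in Proposition~\ref{selfDuH}, again a self-dual code of length $|P|$. It therefore suffices to show that, when $r$ is odd, every self-dual code of length $N$ over ${\rm GR}(p^r,s)$ has $N$ even. For this I would use the torsion codes $\mathrm{Tor}_i(\mathcal D)=\rho(\{\boldsymbol z:p^i\boldsymbol z\in\mathcal D\})\subseteq\mathbb F_{p^s}^{\,N}$, whose dimensions $a_i$ ($0\le i\le r-1$) satisfy $\sum_i a_i=\log_{p^s}|\mathcal D|=rN/2$. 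From $\langle p^i\boldsymbol x,p^{r-1-i}\boldsymbol y\rangle=p^{\,r-1}\langle\boldsymbol x,\boldsymbol y\rangle=0$ for $p^i\boldsymbol x,p^{r-1-i}\boldsymbol y\in\mathcal D$ (the same for the Hermitian form, since $\overline{p^{\,j}}=p^{\,j}$) one obtains $\mathrm{Tor}_i(\mathcal D)\perp\mathrm{Tor}_{r-1-i}(\mathcal D)$, i.e.\ $a_i+a_{r-1-i}\le N$; summing over $i$ and comparing with $\sum_i a_i=rN/2$ forces $a_i+a_{r-1-i}=N$ for every $i$, and the middle index $i=(r-1)/2$ yields $N=2a_{(r-1)/2}$, so $N$ is even. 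Applying this to $U_1$ (resp.\ $E_1$) forces $|P|$ to be even, i.e.\ $p=2$ and $P\neq\{0\}$, which is precisely $p=2$ and $|G|$ even. Combining the two directions gives the equivalence, with identical criteria for the Euclidean and Hermitian cases.

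The main obstacle is the necessity lemma over ${\rm GR}(p^r,s)$. The two ingredients that require care are the identity $\sum_i\dim_{\mathbb F_{p^s}}\mathrm{Tor}_i(\mathcal D)=\log_{p^s}|\mathcal D|$, which I would derive from a generator matrix of $\mathcal D$ in standard form so that $a_i$ equals the partial sum $k_0+\dots+k_i$ of the type of $\mathcal D$, and the verification that the pairing descends correctly to $\mathbb F_{p^s}$ in the Hermitian case, where one must check that conjugation commutes with reduction modulo $p$ so that $\mathrm{Tor}_i\perp\mathrm{Tor}_{r-1-i}$ continues to hold for the Hermitian form.
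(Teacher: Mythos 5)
Your proof is correct, but it takes a genuinely different route from the paper in both directions. For sufficiency in the hard case ($r$ odd, $p=2$, $|G|$ even), the paper works inside the decomposition (\ref{E3})/(\ref{H3}) and builds the self-dual code block by block, taking $2^{r^\prime}{\rm GR}(2^r,\cdot)[P]+2^{r^\prime-1}{\rm GR}(2^r,\cdot)[P](Y^x+1)$ on the type $\I$/$\II$ (resp.\ $\IIp$) components and ${\rm GR}(2^r,\cdot)[P]\times\{0\}$ on the type $\III$ (resp.\ $\IIIp$) pairs, then verifies orthogonality and cardinality by hand; you instead bypass the decomposition entirely and reduce to the residue-field case via $\mathcal{C}=2^{(r-1)/2}\rho^{-1}(D)$, which is shorter and more uniform (it would work over any finite chain ring of odd depth) but leans on the known existence of self-dual abelian codes over $\mathbb{F}_{2^s}[G]$, a result the paper treats as external anyway. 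For necessity, both you and the paper isolate the block at $0$ (i.e.\ $U_1$, resp.\ $E_1$) via Propositions~\ref{selfDuE} and \ref{selfDuH}; the paper then finishes in one line by noting that $|U_1|$ is an integer power of $p^s$ and must equal $(p^s)^{rp^a/2}$, so $rp^a$ is even, whereas your torsion-code argument ($a_i+a_{r-1-i}=N$ and the middle index for odd $r$) proves the same parity lemma with considerably more machinery. Your approach buys generality and a structural explanation; the paper's buys self-containedness and brevity.

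Two small points to tighten. First, for the Euclidean field-case input you cite \cite{JLLX2012}, which is stated for principal ideal group algebras (cyclic Sylow $p$-subgroup); for arbitrary abelian $G$ with $|G|$ even you should cite \cite[Theorem 1.1]{W2002} (as this paper does), or note the elementary construction $\mathbb{F}_{2^s}[G](Y^x+1)$ with $x$ of order $2$. Second, the step $|\mathcal{C}|\,|\mathcal{C}^{\perp}|=|{\rm GR}(2^r,s)[G]|$ for both forms deserves an explicit justification or reference (nondegeneracy of the form over a Frobenius/chain ring), since it is the pivot that upgrades self-orthogonality plus cardinality to self-duality.
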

\begin {proof}
Assume that $G$ is decomposed  as $G=A\oplus P$, where $p\nmid |A|$ and $P$ is the Sylow $p$-subgroup of $G$ of order $p^a$, where $a\geq 0$.

From (\ref{E3}), assume that the code 
 \begin{align*}
	\mathcal{C}\cong \left(\prod_{i=1}^{t_{\I}}U_i \right)\times \left(\prod_{j=1}^{t_{\II}}V_j \right) \times \left(\prod_{k=1}^{t_{\III}}(W_k\times W_k^\prime) \right)
\end{align*}
is Euclidean self-dual in ${\rm GR}(p^r,s)[G]$.  Then, by Proposition~\ref{selfDuE}, $U_1$  is Euclidean self-dual in ${\rm GR}(p^r,s)[P]$. It follows that $|U_1|=(p^s)^{{rp^a}/{2}}$ and  ${rp^a}/{2}$ is an integer. Hence, $r$ is even, or $p=2$ and $a\geq 1$.

For the converse, if $r$ is even, then $p^{{r}/{2}}{\rm GR}(p^r,s)[G]$ is Euclidean self-dual. Assume that $r$ is odd, $p=2$ and $|G|$ is even. Let $r^\prime=\lceil \frac{r}{2}\rceil$.  Then   $|P|=2^a$ with $a\geq 1$ and $r=2r^\prime-1$. Since the order of $P$ is even, $P$ contains an element $x$ of order $2$. Define 
\begin{align*} 	
	\mathcal{C}\cong &\left(\prod_{i=1}^{t_{\I}}  \left(2^{r^\prime}{\rm GR}(2^r,s)[P]+2^{r^\prime-1}{\rm GR}(2^r,s)[P](Y^x+1)\right)   \right)\\
&\times  \left(\prod_{j=1}^{r_{\II}}   \left(2^{r^\prime}{\rm GR}(2^r,2se_j)[P]+2^{r^\prime-1}{\rm GR}(2^r,2se_j)[P](Y^x+1)\right)   \right)\\
	&\times\left(\prod_{k=1}^{t_{\III}} \left( {\rm GR}(2^r,sf_k)  [P]\times \{0\}\right) \right).
\end{align*}
We prove that $\mathcal{C}$ is Euclidean self-dual.  By Proposition~\ref{selfDuE}, it is sufficient to show that  \[U:=2^{r^\prime}{\rm GR}(2^r,s)[P]+2^{r^\prime-1}{\rm GR}(2^r,s)[P](Y^x+1) \]
 is Euclidean self-dual and 
\[V_j:=2^{r^\prime}{\rm GR}(2^r,2se_j)[P]+2^{r^\prime-1}{\rm GR}(2^r,2se_j)[P](Y^x+1)\] is Hermitian self-dual for all $j=1,2\dots,t_{\II}$.

  Let $\boldsymbol{u}=2^{r^\prime} \boldsymbol{e}  +2^{r^\prime-1} \boldsymbol{e}^\prime (Y^x+1) $ and $\boldsymbol{v}=  2^{r^\prime} \boldsymbol{f}  +2^{r^\prime-1} \boldsymbol{f} ^\prime(Y^x+1) $ be elements in $U$, where $\boldsymbol{e}$, $\boldsymbol{e}^\prime,$ $\boldsymbol{f},$ and $\boldsymbol{f}^\prime$ are in ${\rm GR}(2^r,s)[P]$.
Since $r=2r^\prime-1$ and $x=-x$, we have 
\begin{align*}
\langle \boldsymbol{u}, \boldsymbol{v}\rangle_{\rm E}
&= \langle 2^{r^\prime} \boldsymbol{e},  2^{r^\prime} \boldsymbol{f} \rangle_{\rm E}+
\langle 2^{r^\prime} \boldsymbol{e} ,  2^{r^\prime-1} \boldsymbol{f} ^\prime(Y^x+1) \rangle_{\rm E}\\
&~~~+
\langle 2^{r^\prime-1} \boldsymbol{e}^\prime (Y^x+1) , 2^{r^\prime} \boldsymbol{f} \rangle_{\rm E}+
\langle 2^{r^\prime-1} \boldsymbol{e}^\prime (Y^x+1) , 2^{r^\prime-1} \boldsymbol{f} ^\prime(Y^x+1)   \rangle_{\rm E}\\
&= 2^{r-1}\langle  \boldsymbol{e}^\prime (Y^x+1) ,   \boldsymbol{f} ^\prime(Y^x+1)   \rangle_{\rm E}\\
&=2^{r-1}\left( \langle  \boldsymbol{e}^\prime Y^x,   \boldsymbol{f} ^\prime Y^x \rangle_{\rm E}  + \langle  \boldsymbol{e}^\prime Y^x ,   \boldsymbol{f} ^\prime  \rangle_{\rm E}  
+ \langle  \boldsymbol{e}^\prime ,   \boldsymbol{f} ^\prime Y^x   \rangle_{\rm E} + \langle  \boldsymbol{e}^\prime   ,   \boldsymbol{f} ^\prime     \rangle_{\rm E}\right)\\
&=2^{r-1}\left( 2 \langle  \boldsymbol{e}^\prime ,   \boldsymbol{f} ^\prime   \rangle_{\rm E}  + 2\langle  \boldsymbol{e}^\prime Y^x ,   \boldsymbol{f} ^\prime  \rangle_{\rm E}   \right)\\
&=0.
\end{align*}
It is not difficult to verify that 
\begin{align*}
|U|
&=\frac{|2^{r^\prime}{\rm GR}(2^r,s)[P]||2^{r^\prime-1}{\rm GR}(2^r,s)[P](Y^x+1) |}{| (2^{r^\prime}{\rm GR}(2^r,s)[P])\cap (2^{r^\prime-1}{\rm GR}(2^r,s)[P](Y^x+1) )|} =\frac{(2^s)^{{(r^\prime-1)}2^a}   (2^s)^{{{r^\prime}2^a}/{2}}}  
  { (2^s)^{{{(r^\prime-1)}2^a}/{2}}} =(2^s)^{r2^{a-1}}.
\end{align*}
Therefore, $U$ is Euclidean self-dual.

Using arguments similar to the above, we can see that $V_j$ is Hermitian self-dual for all $j=1,2\dots,t_{\II}$.

For the Hermitian case, we assume that $s$ is even. 
The proof of the sufficiency is similar to the Euclidean case, except that   (\ref{H3}) and   Proposition~\ref{selfDuH} are applied instead of  (\ref{E3}) and Proposition~\ref{selfDuE} . 
 
For the converse, if $r$ is even, then $p^{{r}/{2}}{\rm GR}(p^r,s)[G]$ is Hermitian self-dual. Assume that $r$ is odd, $p=2$ and $|G|$ is even.   Then   $P$ contains an element $x$ of order $2$. 
Let $r^\prime=\lceil \frac{r}{2}\rceil$ and define
\begin{align*} 	
	\mathcal{C}\cong &
	\left(\prod_{j=1}^{r_{\IIp}}   \left(2^{r^\prime}{\rm GR}(2^r,s\acute{e}_j)[P]+2^{r^\prime-1}{\rm GR}(2^r,s\acute{e}_j)[P](Y^x+1)\right) \right)\times\left(\prod_{k=1}^{t_{\IIIp}} \left( {\rm GR}(2^r,s\acute{f}_k)  [P]\times \{0\}\right) \right).
\end{align*} 
By arguments similar to those in the proof of the Euclidean case,  we can verity  that  $2^{r^\prime}{\rm GR}(2^r,s\acute{e}_j)[P]+2^{r^\prime-1}{\rm GR}(2^r,s\acute{e}_j)[P](Y^x+1)$ is Hermitian self-dual for all $j=1,2,\dots,t_{\IIp} $. Therefore, $\mathcal{C}$ is Hermitian self-dual by Proposition~\ref{selfDuH}.
\end{proof}

\subsection{Enumeration of Self-Dual Abelian Codes} \label{subsec4.2}
We aim to characterize and enumerate Euclidean and Hermitian self-dual cyclic and abelian codes over Galois rings. For convenience, we fix the following notations. 
\begin{center}
    \begin{tabular}{lp{0.75\textwidth}}
        $\bullet ~NC({\rm GR}(p^r,s),n)$	&-- the number of cyclic codes of length $n$ over ${\rm GR}(p^r,s)$,\\
        $\bullet~NEC({\rm GR}(p^r,s),n)$  &-- the number of Euclidean self-dual cyclic codes of length $n$ over ${\rm GR}(p^r,s)$,\\
        $\bullet~NHC({\rm GR}(p^r,s),n)$  &-- the number of Hermitian self-dual cyclic codes of length $n$ over ${\rm GR}(p^r,s)$,\\
        $\bullet~NA({\rm GR}(p^r,s)[G])$  &-- the number of  abelian codes in ${\rm GR}(p^r,s)[G]$,\\
        $\bullet~NEA({\rm GR}(p^r,s)[G])$ &-- the number of Euclidean self-dual abelian codes in ${\rm GR}(p^r,s)[G]$,\\
        $\bullet~NHA({\rm GR}(p^r,s)[G])$ &-- the number of Hermitian self-dual abelian codes in ${\rm GR}(p^r,s)[G]$.\\
    \end{tabular}
\end{center}

To determine the numbers of Euclidean and Hermitian self-dual abelian codes, we need some group-theoretic  and number-theoretic  results. For completeness, we   recall the following  results. 
 
For a finite group $A$ and a positive integer $d$, let $\mathcal{N}_A(d)$  denote the number of elements  in $A$ of order $d$.   The explicit expression of  $\mathcal{N}_A(d)$  is completely determined in  \cite{B1997}.
 

Let $q$ be a prime power   and let $j$ be a positive integer. The pair $(j,q)$  is said to be {\em oddly good} if $j$ divides $q^t+1$ for some odd  integer $t\geq 1$, and {\em evenly good} if $j$ divides $q^t+1$ for some even  integer $t\geq 2$. It is said to be {\em good} if it is  oddly good or evenly good, and {\em bad} otherwise. The characterization of good and oddly-good pairs of integers can be found in \cite{J2018}, \cite{JLX2011}, \cite{JLLX2012},  \cite{JLS2012}, and \cite{M1997}. 

Let  $\chi$ and $\lambda$ be functions defined on the pair $(j,q)$, where $j$ is a positive integer, as follows.
\begin{align}\label{chi}
	\chi(j,q)
=\begin{cases}
0 &\text{ if } (j,q) \text{ is good},\\
1 &\text{ otherwise,}
\end{cases}
\end{align}
and
\begin{align}\label{lambda}
	\lambda(j,q)=
	         \begin{cases}
	            0&\text{if }  (j,q) \text{ is oddly good},\\
	            1&\text{otherwise}.
             \end{cases}
\end{align}
The following two lemmas are extended from the case where $q$ is a power of $2$ in \cite{JLLX2012} and \cite{JLS2012}  and the proof  is omitted. The readers may refer to \cite[Lemma 4.5]{JLLX2012} and \cite[Lemma 7]{JLS2012} for the idea of the  proofs. 
\begin{lemma}\label{good1} Let $A$ be a finite abelian group such that $\gcd(|A|,p)=1$ and let $h\in A$. Then $S_{p^s}(h)$ is of type $\III$ if and only if $({\rm ord}(h), p^s)$ is bad.
\end{lemma}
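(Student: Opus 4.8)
The plan is to unwind both sides of the claimed equivalence into the same arithmetic condition on the integer $d := {\rm ord}(h)$ and $q := p^s$, after which the statement reduces to a short comparison of two existence conditions. Since $d$ divides $|A|$ and $\gcd(|A|,p)=1$, we have $\gcd(d,q)=1$, so multiplication by $q$ permutes the cyclic group $\langle h\rangle$ of order $d$, and $S_{p^s}(h)=\{q^i\cdot h \mid i\ge 0\}$ is exactly the orbit of $h$ under this action.

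First I would translate the failure of type $\III$ into arithmetic. By definition, $S_{p^s}(h)$ is \emph{not} of type $\III$ precisely when $S_{p^s}(-h)=S_{p^s}(h)$. As $S_{p^s}(h)$ and $S_{p^s}(-h)$ are orbits of one and the same cyclic action, they are either equal or disjoint; hence $S_{p^s}(-h)=S_{p^s}(h)$ if and only if $-h\in S_{p^s}(h)$, that is, if and only if $q^i\cdot h=-h$ for some $i\ge 0$. Because $h$ has additive order $d$, this is equivalent to the existence of $i\ge 0$ with
\begin{align*}
q^i\equiv -1 \pmod{d}.
\end{align*}
Consequently, $S_{p^s}(h)$ is of type $\III$ if and only if no such index $i$ exists.

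Next I would match this with the definition of goodness. Recall that $(d,q)$ is good exactly when $q^t\equiv -1\pmod d$ for some $t\ge 1$, the oddly/evenly good dichotomy merely recording the parity of an admissible $t$. The only discrepancy between this and the condition derived above is that the cyclotomic-class description permits the exponent $i=0$, whereas goodness requires $t\ge 1$. I would settle this by noting that $i=0$ can only help when $q^0=1\equiv -1\pmod d$, forcing $d\mid 2$, and then checking $d\in\{1,2\}$ by hand: for $d=1$ the pair is trivially good, and for $d=2$ (which forces $p$ odd, since $\gcd(d,p)=1$) the integer $q$ is odd, so already $q^1\equiv -1\pmod 2$. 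Hence the two existence statements are equivalent, and $(d,q)$ is good if and only if $q^i\equiv -1\pmod d$ for some $i\ge 0$.

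Combining the two reductions yields the lemma: $S_{p^s}(h)$ is of type $\III$ if and only if $({\rm ord}(h),p^s)=(d,q)$ is not good, i.e.\ bad. All the individual steps are elementary; the step I expect to require the most care — and thus the main obstacle — is the bookkeeping around the exponent $i=0$ versus $t\ge 1$, since the cyclotomic-class orbit naturally includes the zeroth power while the definition of good does not, and one must verify that this mismatch never alters the outcome.
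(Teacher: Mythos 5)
Your proof is correct and takes essentially the same route as the paper: the paper itself omits the argument, pointing to Lemma 4.5 of \cite{JLLX2012} and Lemma 7 of \cite{JLS2012} (the case where the base is a power of $2$), and those proofs proceed by exactly your translation --- $S_{p^s}(h)$ fails to be of type $\III$ precisely when $p^{si}\equiv -1 \pmod{{\rm ord}(h)}$ for some $i\ge 0$, which is then matched against the definition of a good pair. Your explicit bookkeeping for the exponent $i=0$ (which forces ${\rm ord}(h)\mid 2$, settled by checking $d\in\{1,2\}$ directly using $\gcd({\rm ord}(h),p)=1$) fills in the one detail the cited proofs treat implicitly, so nothing is missing.
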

\begin{lemma}\label{good2} Let $A$ be a finite abelian group such that $\gcd(|A|,p)=1$  and let $h\in A\setminus\{0\}$. Then $S_{p^s}(h)$ is of type $\IIIp$ if and only if $({\rm ord}(h), p^{ {s}/{2}})$ is evenly good or bad.
\end{lemma}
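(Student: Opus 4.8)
The plan is to follow the idea of \cite[Lemma 7]{JLS2012}, reducing the statement about cyclotomic classes to an elementary congruence and then to a purely number-theoretic fact about the parities of the exponents $t$ for which ${\rm ord}(h)$ divides $\bigl(p^{s/2}\bigr)^{t}+1$. First I would translate the defining condition of type $\IIp$ into a congruence. Write $n={\rm ord}(h)$ and $q_0=p^{s/2}$, so that $p^{s}=q_0^{2}$ and, since $\gcd(|A|,p)=1$, $q_0$ is a unit modulo $n$. Working inside the cyclic group $\langle h\rangle\cong\mathbb{Z}_n$ we have $S_{p^s}(h)=\{q_0^{2i}\cdot h \mid i\geq 0\}$ and $-p^{s/2}\cdot h=-q_0\cdot h$, so $S_{p^s}(h)=S_{p^s}(-p^{s/2}\cdot h)$ holds precisely when $-q_0\cdot h=q_0^{2i}\cdot h$ for some $i\geq 0$, i.e. when $q_0^{2i-1}\equiv -1 \pmod n$ for some $i\geq 0$. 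The case $i=0$ gives $q_0^{-1}\equiv -1$, equivalently $q_0\equiv -1\pmod n$, which is the $t=1$ instance; hence the condition is equivalent to the existence of an odd integer $t\geq 1$ with $n\mid q_0^{t}+1$. In other words, $S_{p^s}(h)$ is of type $\IIp$ if and only if $({\rm ord}(h),p^{s/2})$ is oddly good, so $S_{p^s}(h)$ is of type $\IIIp$ if and only if $({\rm ord}(h),p^{s/2})$ is \emph{not} oddly good.

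It then remains to identify ``not oddly good'' with ``evenly good or bad''. Since ``bad'' means ``not good'' and ``good'' means ``oddly good or evenly good'', the set of pairs that are evenly good or bad coincides with the set that are not oddly good \emph{provided} no pair is simultaneously oddly good and evenly good. The key number-theoretic step is therefore to prove this mutual exclusivity. I would set $d={\rm ord}_{n}(q_0)$ and argue that $q_0^{t}\equiv -1\pmod n$ forces $d$ to be even: indeed $q_0^{2t}\equiv 1$ gives $d\mid 2t$, and if $d$ were odd then $d\mid t$, whence $q_0^{t}\equiv 1\equiv -1$ and $n\mid 2$. For $n\geq 3$ this forces $d$ even and every admissible $t$ to satisfy $t\equiv d/2\pmod d$; all such $t$ then lie in a single residue class modulo $d$ and hence share the common parity of $d/2$. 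Consequently, for a fixed pair either all witnessing exponents are odd or all are even, which yields the mutual exclusivity and completes the equivalence. As a consistency check one recovers Remark~\ref{rem1}(4): a type $\IIp$ class has cardinality $\nu={\rm ord}_{n}(q_0^{2})=d/2$, and oddly good $\Leftrightarrow d/2$ odd $\Leftrightarrow \nu$ odd.

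The main obstacle is exactly this parity-invariance argument together with the delicate small-order situation it exposes. The implication ``$q_0^{t}\equiv -1$ forces $d$ even'' uses $-1\not\equiv 1\pmod n$ and therefore breaks down at $n=2$; in the setting of \cite{JLS2012}, where $q_0$ is a power of $2$, the pair $(2,q_0)$ is bad and no such degeneracy arises, but for odd $p$ the class of order-$2$ elements must be treated separately (there $h=-h$, so $-q_0\cdot h=h$ and $S_{p^s}(h)$ is directly seen to be of type $\IIp$). I would accordingly dispose of $n\leq 2$ as explicit base cases and run the parity lemma only for $n\geq 3$, where $d$ is genuinely forced to be even; the remaining manipulations—rewriting the orbit in $\mathbb{Z}_n$ and clearing the unit $q_0$—are routine once the parity statement is in hand.
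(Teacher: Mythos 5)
Your overall route is the one the paper itself intends: the paper gives no proof of Lemma~\ref{good2}, deferring to the idea of \cite[Lemma 7]{JLS2012}, and your reduction is exactly that idea. Working in $\langle h\rangle\cong\mathbb{Z}_n$ with $n={\rm ord}(h)$ and $q_0=p^{s/2}$, you correctly show that $S_{p^s}(h)$ is of type $\IIp$ precisely when $q_0^{t}\equiv -1\pmod{n}$ for some odd $t\geq 1$, i.e.\ precisely when $({\rm ord}(h),p^{s/2})$ is oddly good; and your parity argument (every $t$ with $n\mid q_0^{t}+1$ satisfies $t\equiv d/2 \pmod{d}$, $d={\rm ord}_n(q_0)$, hence all witnesses share one parity) correctly establishes that oddly good and evenly good are mutually exclusive when $n\geq 3$. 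So for ${\rm ord}(h)\geq 3$ your proof is complete and correct.

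The genuine gap is the case $n=2$, which you claim to ``dispose of'' as a base case; in fact it cannot be disposed of, because the stated biconditional is false there, and your own observations almost reveal this. If $n=2$ then $p$ is odd, so $q_0=p^{s/2}$ is odd and $2\mid q_0^{t}+1$ for \emph{every} $t\geq 1$; hence $(2,q_0)$ is simultaneously oddly good and evenly good, and the right-hand side ``evenly good or bad'' is true. But, as you note, $h=-h$ forces $S_{p^s}(h)$ to be of type $\IIp$, so the left-hand side is false. Concretely, $p=3$, $s=2$, $A=\mathbb{Z}_2$, $h=1$ gives the type $\IIp$ class $\{1\}$ while $2\mid 3^2+1$. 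What your argument actually proves is the corrected statement ``$S_{p^s}(h)$ is of type $\IIIp$ if and only if $({\rm ord}(h),p^{s/2})$ is not oddly good,'' i.e.\ $\lambda({\rm ord}(h),p^{s/2})=1$; this is the form effectively used in the proof of Theorem~\ref{NEHA}, and it coincides with the lemma's wording only under the mutual exclusivity that holds for $n\geq 3$. The discrepancy is invisible in \cite{JLS2012}, where $p=2$ forces $|A|$ odd so order-$2$ elements never occur, but it is real for odd $p$. Your write-up should therefore either restrict the claimed equivalence to ${\rm ord}(h)\neq 2$ or replace ``evenly good or bad'' by ``not oddly good''; asserting that the $n=2$ base case is routine leaves the proof of the lemma as stated incomplete--indeed impossible.
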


Utilizing the decomposition in Section~\ref{sec2} and the discussion above, we  obtain the following formulas for the numbers of Euclidean and Hermitian self-dual abelian codes in ${\rm GR}(p^r,s)[G]$, where $G$ is an arbitrary finite abelian group.  Without loss of generality, we assume that $G=A\oplus P$, where $P$ is a finite abelian $p$-group and $A$ is a finite abelian group such that $p\nmid |A|$.
\begin{theorem} \label{NEHA}
Let $p$ be a prime and let $s,r$ be integers such that $1\leq s$ and $1\leq r$. Let $A$ be a finite abelian group of exponent $M$ such that $p\nmid M$ and let $P$ be a finite abelian $p$-group.
Then 
\begin{align*}NEA({\rm GR}(p^r,s)[A\oplus P])
&=
\left(NEA({\rm GR}(p^r,s)[P])\right)^{ \sum\limits_{{d\mid M, {\rm ord}_d(p^s)= 1}} (1-\chi(d,p^s))\mathcal{N}_A(d)} \\
&~~~~\times\prod_{\substack{d\mid M\\ {\rm ord}_d(p^s)\ne 1}} \left(NHA({\rm GR}(p^r,s\cdot{\rm ord}_d(p^s))[P])\right)^{(1-\chi(d,p^s))\frac{\mathcal{N}_A(d)}{{\rm ord}_d(p^s)}}  \\
&~~~~\times\prod_{d\mid M} \left(NA({\rm GR}(p^r,s\cdot{\rm ord}_d(p^s))[P])\right)^{\chi(d,p^s)\frac{\mathcal{N}_A(d)}{2{\rm ord}_d(p^s)}} .
\end{align*}
In addition, if $s$ is even, then
\begin{align*}
	NHA({\rm GR}(p^r,s)[A\oplus P])=
&\prod_{{d\mid M}} \left(NHA({\rm GR}(p^r,s\cdot{\rm ord}_d(p^s))[P])\right)^{(1-\lambda(d,p^\frac{s}{2}))\frac{\mathcal{N}_A(d)}{{\rm ord}_d(p^s)}}  \\
&\times\prod_{d\mid M} \left(NA({\rm GR}(p^r,s\cdot{\rm ord}_d(p^s))[P])\right)^{\lambda(d,p^\frac{s}{2})\frac{\mathcal{N}_A(d)}{2{\rm ord}_d(p^s)}}.
\end{align*}
\end{theorem}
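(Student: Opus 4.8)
The plan is to read both formulas directly off the structural decompositions together with the self-duality characterizations already established, and then to repackage the resulting products by grouping the $p^s$-cyclotomic classes according to the additive order of a representative element. For the Euclidean count I would start from Proposition~\ref{selfDuE}: a code $\mathcal{C}$ written as in (\ref{E3}) is Euclidean self-dual precisely when each $U_i$ is Euclidean self-dual in ${\rm GR}(p^r,s)[P]$, each $V_j$ is Hermitian self-dual in ${\rm GR}(p^r,2se_j)[P]$, and each pair satisfies $W_k^\prime=W_k^{\perp_{\rm E}}$. Since (\ref{E2}) is a direct product of rings, ideals split componentwise and the self-duality conditions factorize, so the number of self-dual codes is the product over components; a type~$\III$ pair is counted by the free choice of $W_k$ (with $W_k^\prime$ then determined). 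Hence
\begin{align*}
NEA({\rm GR}(p^r,s)[G])
&= \prod_{i=1}^{t_{\I}} NEA({\rm GR}(p^r,s)[P])
   \prod_{j=1}^{t_{\II}} NHA({\rm GR}(p^r,2se_j)[P])
   \prod_{k=1}^{t_{\III}} NA({\rm GR}(p^r,sf_k)[P]).
\end{align*}

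Next I would convert this product into the stated form by sorting the classes by the order $d$ of a representative. The key facts are: every element of order $d$ has a $p^s$-cyclotomic class of cardinality exactly ${\rm ord}_d(p^s)$, so the $\mathcal{N}_A(d)$ elements of order $d$ fall into $\mathcal{N}_A(d)/{\rm ord}_d(p^s)$ classes; by Lemma~\ref{good1} the property of being type~$\III$ or not depends only on $d$, through $\chi(d,p^s)$; and a class of cardinality one is of type~$\I$ exactly when $a=-a$, i.e. $d\mid 2$, and is otherwise type~$\III$. Using these, the type~$\I$ factors correspond to $d$ with ${\rm ord}_d(p^s)=1$ and $(d,p^s)$ good, giving the exponent $\sum_{d\mid M,\,{\rm ord}_d(p^s)=1}(1-\chi(d,p^s))\mathcal{N}_A(d)$ on $NEA({\rm GR}(p^r,s)[P])$; the type~$\II$ factors correspond to $d$ with ${\rm ord}_d(p^s)\neq 1$ and $(d,p^s)$ good, where $2e_j={\rm ord}_d(p^s)$ and the exponent is $(1-\chi(d,p^s))\mathcal{N}_A(d)/{\rm ord}_d(p^s)$; and the type~$\III$ pairs correspond to $d$ with $(d,p^s)$ bad, where $f_k={\rm ord}_d(p^s)$ and, since these classes pair off, the exponent is $\chi(d,p^s)\mathcal{N}_A(d)/(2\,{\rm ord}_d(p^s))$. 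Assembling these yields the first displayed formula.

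For the Hermitian count I would argue identically, now invoking (\ref{H3}), (\ref{dualH}), and Proposition~\ref{selfDuH}, so that $\mathcal{C}$ is Hermitian self-dual iff each $E_j$ is Hermitian self-dual in ${\rm GR}(p^r,s\acute{e}_j)[P]$ and each pair satisfies $F_k^\prime=F_k^{\perp_{\rm E}}$. Grouping by order $d$ with $\acute{e}_j=\acute{f}_k={\rm ord}_d(p^s)$, the only change is that the split between the two kinds of factors is governed by type~$\IIp$ versus type~$\IIIp$, which by Lemma~\ref{good2} is detected by $\lambda(d,p^{s/2})$ in place of $\chi(d,p^s)$. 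This produces the exponents $(1-\lambda(d,p^{s/2}))\mathcal{N}_A(d)/{\rm ord}_d(p^s)$ on the $NHA$ factors and $\lambda(d,p^{s/2})\mathcal{N}_A(d)/(2\,{\rm ord}_d(p^s))$ on the $NA$ factors, giving the second formula.

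I expect the main obstacle to be the bookkeeping for the type~$\I$ term. Unlike types~$\II$, $\III$, $\IIp$, $\IIIp$, whose membership is governed purely by $d$ via Lemmas~\ref{good1} and~\ref{good2}, a singleton class (the case ${\rm ord}_d(p^s)=1$) splits into type~$\I$ and type~$\III$ according to whether $a=-a$. One must check that $(d,p^s)$ is good exactly for the type~$\I$ singletons (forcing $d\in\{1,2\}$, and noting $\gcd(|A|,p)=1$ makes $d=2$ possible only for odd $p$) and bad for the remaining singletons with $d\geq 3$, so that $(1-\chi(d,p^s))$ correctly isolates the type~$\I$ contribution and the exponent reduces to $t_{\I}$. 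The remaining points, namely that $\mathcal{N}_A(d)/{\rm ord}_d(p^s)$ is integral and that type~$\III$ (resp.\ $\IIIp$) classes pair within a fixed order since ${\rm ord}(-a)={\rm ord}(a)$, are routine.
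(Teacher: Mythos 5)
Your proposal is correct and takes essentially the same approach as the paper's proof: both count via Propositions~\ref{selfDuE} and~\ref{selfDuH} applied to the decompositions (\ref{E3}) and (\ref{H3}), then group the $p^s$-cyclotomic classes by the order $d$ of a representative, using the class cardinality ${\rm ord}_d(p^s)$ together with Lemmas~\ref{good1} and~\ref{good2} to translate the type of each class into the indicators $\chi(d,p^s)$ and $\lambda(d,p^{s/2})$. The only difference is presentational — you first write the product indexed by class types $t_{\I},t_{\II},t_{\III}$ and then regroup by $d$, while the paper runs the case analysis over divisors $d$ directly — and your careful treatment of the singleton (type $\I$ versus type $\III$) bookkeeping matches the paper's Case~1/Case~3 split.
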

\begin{proof}
First, we consider the Euclidean case.
From (\ref{E3}) and Proposition~\ref{selfDuE}, it is sufficient to count the numbers of Euclidean self-dual abelian codes $U_i$'s, the numbers of Hermitian self-dual abelian codes $V_i$'s, and the numbers of  abelian codes $W_i$'s  which     correspond  to  the $p^s$-cyclotomic classes of types $\I$, $\II$, and $\III$, respectively.

From \cite[Remark 2.5]{JLS2013},  we note that the elements in $A$ of the same order are partitioned into $p^s$-cyclotomic classes of the same type.
For each divisor $d$ of $M$, a  $p^s$-cyclotomic class containing an element of order $d$ has cardinality 
${{\rm ord}_d(p^s)}$, and hence, the number of such  $p^s$-cyclotomic classes   is $\frac{\mathcal{N}_A(d)}{{\rm ord}_d(p^s)}$.

For each divisor $d$ of $M$, we consider the following $3$ cases.

\noindent {\bf  Case 1.}   $\chi(d,p^s)=0$  and  ${\rm ord}_d(p^s)=1$. By Lemma~\ref{good1}, every  $p^s$-cyclotomic class    of $A$  containing an element  of order $d$  is of type  $\I$.  Since there are  $\frac{\mathcal{N}_A(d)}{{\rm ord}_d(p^s)}$ such $p^s$-cyclotomic classes,  the  number  of Euclidean self-dual abelian codes $U_i$'s corresponding to $d$ is 
\[\left(NEA({\rm GR}(p^r,s\cdot{\rm ord}_d(p^s))[P])\right)^{ \frac{\mathcal{N}_A(d)}{{\rm ord}_d(p^s)}}=\left(NEA({\rm GR}(p^r,s)[P])\right)^{   (1-\chi(d,p^s))\mathcal{N}_A(d)}  .\]

\noindent {\bf  Case 2.}   $\chi(d,p^s)=0$ and  ${\rm ord}_d(p^s)\ne 1$. By Lemma~\ref{good1}, every  $p^s$-cyclotomic class    of $A$  containing an element   of order $d$  is of type  $\II$.  Since there are  $\frac{\mathcal{N}_A(d)}{{\rm ord}_d(p^s)}$ such $p^s$-cyclotomic classes,  the  number  of Hermitian self-dual abelian codes $V_i$'s corresponding to $d$ is 
\begin{align*} \left(NHA({\rm GR}(p^r,s\cdot{\rm ord}_d(p^s))[P])\right)^{ \frac{\mathcal{N}_A(d)}{{\rm ord}_d(p^s)}} =\left(NHA({\rm GR}(p^r,s\cdot{\rm ord}_d(p^s))[P])\right)^{(1-\chi(d,p^s))\frac{\mathcal{N}_A(d)}{{\rm ord}_d(p^s)}}.
\end{align*}
 
\noindent {\bf  Case 3.}   $\chi(d,p^s)=1$. By Lemma~\ref{good1}, every  $p^s$-cyclotomic class    of $A$  containing an element   of order $d$  is of type  $\III$.  Since there are  $\frac{\mathcal{N}_A(d)}{{\rm ord}_d(p^s)}$ such $p^s$-cyclotomic classes,  the  number  of   abelian codes $W_i$'s corresponding to $d$ is 
 \[\left(NA({\rm GR}(p^r,s\cdot{\rm ord}_d(p^s))[P])\right)^{ \frac{\mathcal{N}_A(d)}{{\rm ord}_d(p^s)}}\left(NA({\rm GR}(p^r,s\cdot{\rm ord}_d(p^s))[P])\right)^{\chi(d,p^s)\frac{\mathcal{N}_A(d)}{2{\rm ord}_d(p^s)}}.\]
 Since $d$ runs over all divisors of $M$, we conclude the desired result.

For the Hermitian case, by Proposition~\ref{selfDuH}, it suffices to count the numbers  of Hermitian self-dual abelian codes $E_i$'s  and the numbers of  abelian codes $F_i$'s in (\ref{H3}) which    correspond  to  the $p^s$-cyclotomic classes of types $\IIp$ and $\IIIp$, respectively.    
 Considering the cases where $\lambda(d,p^\frac{s}{2})=1$   and where $ \lambda(d,p^\frac{s}{2})=0$,   the desired result can be obtained similarly to the Euclidean case, where Lemma~\ref{good2} is applied instead of Lemma~\ref{good1}.
\end{proof}

Note that, if $A$ is a cyclic group, the exponent $M$   is just the cardinality of $A$ and  $\mathcal{N}_A(d)$ is just $\phi(d)$, where $\phi$ is an Euler's totient function.

In Theorem~\ref{NEHA}, if $P$ is cyclic of order $p^a$, then the values $NA,NEA$ and $NHA$ may be replaced by $NC,NEC,$ and $NHC$, respectively. In general,   these values are not known in the literature.  Some special cases  where  $i)$ $\gcd(p,|G|)=1$;  and  $ii)$ $r=2$ and the Sylow $p$-subgroup of $G$ is cyclic   are  discussed in the following subsections.



\subsection{Self-Dual Abelian Codes in ${\rm GR}(p^r,s)[A]$, $\gcd(p,|A|)=1$}   \label{subsec4.3}
In this subsection, we complete the enumeration of Euclidean and Hermitian self-dual abelian codes in  ${\rm GR}(p^r,s)[A]$, where  $\gcd(p,|A|)=1$, or equivalently,     ${\rm GR}(p^r,s)[A]$ is a principal ideal group ring (see Proposition \ref{PIGR}).
If $A$ is cyclic,   this case is identical with  that of  simple root cyclic codes.

\begin{proposition}\label{PIGR}
	Let $p$ be a prime number and let $r,s$ be positive integers. Let $G$ be a finite abelian group. Then one of the following statements holds.
	\begin{enumerate}[$i)$]
		\item If $r=1$, then ${\rm GR}(p^r,s)[G]\cong \mathbb{F}_{p^s}[G]$ is a principal ideal ring if and only if the Sylow $p$-subgroup of $G$ is cyclic.
		\item If $r\geq 2$, then ${\rm GR}(p^r,s)[G]$ is a principal ideal ring if and only if $\gcd(p,|G|)=1$.
	\end{enumerate}
\end{proposition}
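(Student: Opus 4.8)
The plan is to reduce the problem, via the structural isomorphisms already established, to a question about group rings of a $p$-group over a Galois ring, and then to settle that question with a cotangent-space computation. First I would write $G=A\oplus P$ with $P$ the Sylow $p$-subgroup and $p\nmid|A|$, so that ${\rm GR}(p^r,s)[G]\cong\mathcal{R}[P]$ by Lemma~\ref{lemma2.1}. Using the decomposition of $\mathcal{R}$ recalled in Section~\ref{sec2} (see also \cite{KR2003}), $\mathcal{R}$ is a finite direct product of Galois rings ${\rm GR}(p^r,s_i)$, whence $\mathcal{R}[P]\cong\prod_i{\rm GR}(p^r,s_i)[P]$. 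Since a finite direct product of rings is a principal ideal ring if and only if each factor is (an ideal of a product is the product of its components, and such a product is principal exactly when each component is), the proposition reduces to determining, for a $p$-group $P$ and a positive integer $s'$, precisely when $R_{s'}:={\rm GR}(p^r,s')[P]$ is a principal ideal ring.

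Next I would observe that each $R_{s'}$ is a finite local ring. Indeed, reducing modulo $p$ gives $R_{s'}/pR_{s'}\cong\mathbb{F}_{p^{s'}}[P]$, which is local because $P$ is a $p$-group (its augmentation ideal is nilpotent and is the unique maximal ideal); since $p$ lies in the Jacobson radical of $R_{s'}$, the maximal ideals of $R_{s'}$ correspond to those of $\mathbb{F}_{p^{s'}}[P]$, so $R_{s'}$ has a unique maximal ideal $\mathfrak{m}$. For a finite local ring, being a principal ideal ring is equivalent to being a chain ring, which in turn is equivalent to $\mathfrak{m}$ being principal, i.e.\ to $\dim_{\mathbb{F}_{p^{s'}}}\mathfrak{m}/\mathfrak{m}^2\le 1$. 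Thus everything comes down to computing this cotangent dimension, where $\mathfrak{m}=(p)+I_P$ and $I_P$ is the augmentation ideal generated by the elements $Y^g-1$, $g\in P$.

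The computation splits according to $r$. When $r=1$ we have $R_{s'}=\mathbb{F}_{p^{s'}}[P]$ and $\mathfrak{m}=I_P$; the standard identification $I_P/I_P^2\cong P\otimes_{\mathbb{Z}}\mathbb{F}_{p^{s'}}$ (sending $g-1\mapsto g\otimes 1$) has dimension equal to the rank, that is, the minimal number of generators, of $P$. This is at most $1$ exactly when $P$ is cyclic, giving part $i)$. When $r\ge 2$ and $P$ is trivial, $R_{s'}={\rm GR}(p^r,s')$ is a Galois ring with principal maximal ideal $(p)$, hence a chain ring; so $\gcd(p,|G|)=1$ yields a product of chain rings and therefore a principal ideal ring. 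When $r\ge 2$ and $P$ is nontrivial, I would show $\dim_{\mathbb{F}_{p^{s'}}}\mathfrak{m}/\mathfrak{m}^2\ge 2$ by exhibiting two independent classes: the augmentation map $R_{s'}\to{\rm GR}(p^r,s')$ sends $\mathfrak{m}\to(p)$ and $\mathfrak{m}^2\to(p^2)$, detecting $p$ (nonzero since $r\ge 2$) while killing $Y^g-1$, whereas reduction modulo $p$ sends $\mathfrak{m}\to I_P^{\,\mathbb{F}}$ and $\mathfrak{m}^2\to(I_P^{\,\mathbb{F}})^2$, detecting $Y^g-1$ for a suitable generator $g$ while killing $p$. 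Hence the classes of $p$ and of $Y^g-1$ are linearly independent, $\mathfrak{m}$ is not principal, and $R_{s'}$ is not a principal ideal ring; since $p\mid|G|$ makes $P$ nontrivial, the product $\mathcal{R}[P]$ fails to be a principal ideal ring, giving part $ii)$.

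The main obstacle I anticipate is the case $r\ge 2$ with $P\ne 0$: one must rule out that the two natural generators $p$ and $Y^g-1$ of $\mathfrak{m}$ collapse to a single class modulo $\mathfrak{m}^2$. The resolution is exactly the pair of complementary quotient maps above, where the augmentation map isolates the arithmetic direction $(p)/(p^2)$ and the reduction modulo $p$ isolates the group direction $I_P^{\,\mathbb{F}}/(I_P^{\,\mathbb{F}})^2$, so that their product witnesses $\dim_{\mathbb{F}_{p^{s'}}}\mathfrak{m}/\mathfrak{m}^2\ge 2$. The remaining verifications, namely locality, the chain-ring criterion, and the product reduction, are routine given the decomposition established in Section~\ref{sec2}.
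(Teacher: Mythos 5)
Your proof is correct, but it follows a genuinely different route from the paper's. The paper handles part $i)$ purely by citation (Fisher--Sehgal \cite{FiSe1976}) and proves only part $ii)$, in two halves that both lean on the literature: for necessity it passes to the cyclic factors $\mathbb{Z}_{n_i}$ of $G$ via \cite[Corollary 2.3]{CLZ2006} and invokes S\u{a}l\u{a}gean's theorem \cite[Theorem 3.4]{S2006} to force $\gcd(p,n_i)=1$; for sufficiency it inducts on the number of cyclic factors of $G$, using \cite[Lemma 3.9]{CLZ2006} to express ${\rm GR}(p^r,s)[G]$ as a finite direct sum of \emph{special} rings, each of which is a principal ideal ring. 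You instead split $G=A\oplus P$, use the decomposition (\ref{E1}) to reduce to the factors ${\rm GR}(p^r,s_i)[P]$, observe that each is a finite local ring with maximal ideal $\mathfrak{m}=(p)+I_P$, and decide principality by the cotangent criterion $\dim_{\mathbb{F}_{p^{s_i}}}\mathfrak{m}/\mathfrak{m}^2\le 1$: that dimension equals the rank of $P$ when $r=1$, and is at least $2$ when $r\ge 2$ and $P\neq\{0\}$, because the augmentation map and reduction modulo $p$ separate the classes of $p$ and $Y^g-1$ (for any $g\notin pP$). Your approach buys self-containment and uniformity---both $r=1$ and $r\ge 2$ fall to the same computation, with no appeal to the morphic-ring machinery or to S\u{a}l\u{a}gean's theorem, and the rank of $P$ appears as the transparent obstruction---while the paper's approach buys brevity given the cited results and situates the proposition within the existing literature on principal ideal group rings. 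The one step you should write out in full is the independence of the class of $p$: from $\epsilon(a)p\in(p^2)$ one concludes $\epsilon(a)\in(p)+{\rm Ann}(p)=(p)+(p^{r-1})=(p)$, equivalently $(p)/(p^2)$ is a one-dimensional $\mathbb{F}_{p^{s_i}}$-space; this is exactly where the hypothesis $r\ge 2$ enters, and it is routine.
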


For $r=1$, the statement	has been proven in \cite{FiSe1976}.  For  $r\geq 2$, using notion of   morphic  rings (see the definition in \cite{CLZ2006}),  it has been shown that $\mathbb{Z}_ {p^r}[G]$ is principal ideal if and only if $\gcd(p,|G|)=1$ (see \cite[Theorem 1.2]{D2007} and \cite[Theorem 3.12 and Corollary 3.13]{CLZ2006}). The statements can be extended naturally to the case of ${\rm GR}(p^r,s)[G]$.

The enumerations of Euclidean and Hermitian self-dual abelian codes in a principal ideal group ring  ${\rm GR}(p^r,s)[A]$ is given as follows.
\begin{theorem} \label{NEHAP}
Let $p$ be a prime and let $s,r$ be positive integers. Let $A$ be a finite abelian group of exponent $M$ such that  $\gcd(p,|A|)=1$.
Then 
\begin{align*}NEA({\rm GR}(p^r,s)[A])=
	\begin{cases} (1+r)^{\sum\limits_{d\mid M}\chi(d,p^s)\frac{\mathcal{N}_A(d)}{2{\rm ord}_d(p^s)}}&\text{if }r \text{ is even,} \\
		0&\text{if }r \text{ is odd}.
	\end{cases}
\end{align*}
In addition, if $s$ is even, then
\begin{align*}
	NHA({\rm GR}(p^r,s)[A])=
 \begin{cases}(1+r)^{\sum\limits_{d\mid M}\lambda(d,p^{{s}/{2}})\frac{\mathcal{N}_A(d)}{2{\rm ord}_d(p^s)}}&\text{if }r \text{ is even,}\\
		0&\text{if }r \text{ is odd}.
	\end{cases}
\end{align*}
\end{theorem}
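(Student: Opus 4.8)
The plan is to specialize Theorem~\ref{NEHA} to the situation where the Sylow $p$-subgroup is trivial. Since $\gcd(p,|A|)=1$, I would take $G=A$ and $P=\{0\}$ in Theorem~\ref{NEHA}, so that every factor ${\rm GR}(p^r,s\cdot{\rm ord}_d(p^s))[P]$ collapses to the Galois ring ${\rm GR}(p^r,s\cdot{\rm ord}_d(p^s))$ itself. The entire computation then reduces to evaluating the three building blocks $NA$, $NEA$, and $NHA$ for a single Galois ring, all of which share the same fixed value of $r$. First I would record that ${\rm GR}(p^r,s')$ is a finite chain ring whose only ideals are $(p^i)$ for $0\le i\le r$, so $NA({\rm GR}(p^r,s'))=r+1$ for every $s'$.

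Next I would compute the self-dual counts for a Galois ring. For the length-one Euclidean form $\langle u,v\rangle_{\rm E}=uv$, the dual of the ideal $(p^i)$ is $(p^{r-i})$, since $p^i y=0$ forces $y\in(p^{r-i})$ and a cardinality count confirms equality. Hence $(p^i)$ is Euclidean self-dual precisely when $i=r-i$, which is solvable only for even $r$ and then gives the unique self-dual ideal $(p^{r/2})$. The same argument applies to the Hermitian form $\langle u,v\rangle_{\rm H}=u\overline{v}$ once one observes that the conjugation automorphism fixes $p$, hence fixes each ideal $(p^{r-i})$. Thus $NEA({\rm GR}(p^r,s'))=NHA({\rm GR}(p^r,s'))$ equals $1$ when $r$ is even and $0$ when $r$ is odd, independently of $s'$.

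Substituting these values into the formulas of Theorem~\ref{NEHA} then finishes the enumeration. When $r$ is odd, the Galois-ring factors $NEA$ and $NHA$ vanish, and since the divisor $d=1$ contributes a strictly positive exponent (indeed ${\rm ord}_1(p^s)=1$, $\chi(1,p^s)=\lambda(1,p^{s/2})=0$, and $\mathcal{N}_A(1)=1$), each product is $0$. When $r$ is even, every $NEA$ and $NHA$ factor equals $1$, leaving only the $NA=r+1$ factors and producing exactly the claimed powers of $1+r$. For the final uniqueness statement I would use $1+r>1$ to reduce $NEA=1$ (resp.\ $NHA=1$) to the vanishing of the exponent $\sum_{d\mid M}\chi(d,p^s)\frac{\mathcal{N}_A(d)}{2\,{\rm ord}_d(p^s)}$ (resp.\ the analogue with $\lambda(d,p^{s/2})$); since $\mathcal{N}_A(d)>0$ for every divisor $d$ of the exponent $M$, this forces $\chi(d,p^s)=0$ (resp.\ $\lambda(d,p^{s/2})=0$) for all $d\mid M$. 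The hard part will be the last equivalence, where I would argue that goodness passes to divisors: if $(M,p^s)$ is good then $M\mid(p^s)^t+1$ for some $t$, so $d\mid(p^s)^t+1$ and $(d,p^s)$ is good for every $d\mid M$ with the same parity of $t$ (which also settles the oddly good case), while the converse is immediate on taking $d=M$. This yields $NEA=1\iff(M,p^s)$ good and $NHA=1\iff(M,p^{s/2})$ oddly good, with the surviving code being $p^{r/2}{\rm GR}(p^r,s)[A]$.
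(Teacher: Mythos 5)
Your proposal is correct and follows essentially the same route as the paper: the paper's own proof also specializes Theorem~\ref{NEHA} by taking $P=\{0\}$, records that ${\rm GR}(p^r,s')$ has exactly $r+1$ ideals with $(p^{r/2})$ the unique Euclidean (and Hermitian) self-dual one when $r$ is even and none when $r$ is odd, and concludes directly. Your write-up merely makes explicit some steps the paper leaves implicit (the annihilator computation for the duals of $(p^i)$, the role of the divisor $d=1$ in forcing the product to vanish for odd $r$, and the fact that goodness of $(M,p^s)$ passes to all divisors $d\mid M$), all of which are sound.
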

\begin{proof}
In ${\rm GR}(p^r,s)$, every ideal  can be regarded  as an abelian code in ${\rm GR}(p^r,s)[G]$ with $G=\{0\}$, and we have the following facts.
\begin{enumerate}[$i)$]
	\item The number of abelian codes in  ${\rm GR}(p^r,s)$ is $r+1$.
	\item If $r$ is odd, then there are   neither  Euclidean  self-dual abelian codes nor  Hermitian self-dual abelian codes   in ${\rm GR}(p^r,s)$.
	\item If $r$ is even, then  $r^{ {r}/{2}}{\rm GR}(p^r,s)$ is the only  Euclidean  self-dual  abelian code and it is the only  Hermitian self-dual abelian  code if $s$ is even.
\end{enumerate}
The above results hold true for any Galois extension of ${\rm GR}(p^r,s)$.

By considering $P=\{0\}$ in Theorem~\ref{NEHA}, the result follows immediately. 
\end{proof}

Note that, if $A$ is  cyclic,   $M$ and  $\mathcal{N}_A(d)$ can be replaced by   the cardinality of $A$ and $\phi(d)$, respectively, where $\phi$ is the Euler's totient function.

If $A$ is cyclic of order $n$ with $\gcd(n,p)=1$, then the number of Euclidean self-dual cyclic codes of length $n$ over ${\rm GR}(p^r,s)$ obtained in Theorem~\ref{NEHAP} is a special case of \cite[Theorem 5.7]{BGG2012} by viewing ${\rm GR}(p^r,s)$ as a finite chain ring of depth $r$.


\subsection{Self-Dual Abelian   Codes in ${\rm GR}(p^2,s)[A\oplus C_{p^a}]$}  \label{subsec4.4}
In this section, we restrict our study to the case where $r=2$ and $P=C_{p^a}$, a cyclic group of order $p^a$.  The enumerations of  Euclidean and Hermitian self-dual abelian codes in   ${\rm GR}(p^2,s)[A\oplus C_{p^a}]$ can be  obtained  as an application  of   Theorem~\ref{NEHA} and    some known results on  cyclic codes of length $p^a$ over   ${\rm GR}(p^2,s)$.

We recall some results on cyclic codes of length $p^a$ over ${\rm GR}(p^2,s)$.
The next lemma follows immediately from \cite[Corollary 3.9]{KLL2008} and  \cite[Theorem 3.6]{KLL2008}.
\begin{lemma}\label{N-cyclic2} 
The number of   cyclic codes of length $p^a$ over ${\rm GR}(p^2,s)$ is 
	\begin{align}
		NC({\rm GR}(p^2,s),p^a)=2\sum_{d=0}^{p^a-1} \frac{p^{s( \min\{\lfloor \frac{d}{2}\rfloor, p^{a-1}\}  +1)}-1}{p^s-1} +\frac{p^{s(  p^{a-1}  +1)}-1}{p^s-1}.
	\end{align}
\end{lemma}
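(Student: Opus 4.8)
The plan is to identify cyclic codes of length $p^a$ over ${\rm GR}(p^2,s)$ with the ideals of the ambient ring $R:={\rm GR}(p^2,s)[X]/\langle X^{p^a}-1\rangle$ and then read off the count from the two cited results. Since we are in the repeated-root regime, $X^{p^a}-1$ reduces to $(X-1)^{p^a}$ modulo $p$, so it is natural to pass to the local coordinate $v:=X-1$; the ring $R$ is then local with residue field $\mathbb{F}_{p^s}$, its maximal ideal generated by $p$ and $v$. The first step is to invoke \cite[Theorem 3.6]{KLL2008}, which furnishes a complete, non-redundant list of canonical generating sets for every ideal of $R$, each ideal being recorded by a small set of discrete invariants (essentially the $v$-adic level of its principal generator and of its $p$-torsion part) together with a free polynomial parameter.

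The second step is to pass to the enumeration. Set $q:=p^s$. For each admissible choice of discrete invariants, \cite[Corollary 3.9]{KLL2008} gives the number of ideals of that type in closed form, and each such count is a geometric sum $1+q+\dots+q^{k-1}=\frac{q^{k}-1}{q-1}$, where the exponent $k$ measures how many independent choices remain for the free polynomial parameter once the discrete data are fixed. I would index the discrete data by a single integer $d$ running from $0$ to $p^a-1$ (the $v$-adic level of the principal generator) and track the corresponding exponent. The exponent $\min\{\lfloor d/2\rfloor,\,p^{a-1}\}+1$ then has a transparent origin: the floor $\lfloor d/2\rfloor$ reflects the depth-two structure of ${\rm GR}(p^2,s)$ (the $p$-level of the code can advance only at half the rate of the $v$-level before it must saturate), and the cap at $p^{a-1}$ records the level beyond which the associated free module stops growing, because the relation $(v+1)^{p^a}=1$ forces the relevant filtration to stabilize.

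The third step is to collect the per-type counts. Grouping the ideals according to whether their generator carries a genuine mixed ($p$-involving) term yields two families contributing identical geometric sums over $d$, which accounts for the overall factor $2$, while the single degenerate family (the one whose $p$-torsion already saturates) contributes the isolated term $\frac{q^{\,p^{a-1}+1}-1}{q-1}$. Adding these gives exactly $NC({\rm GR}(p^2,s),p^a)=2\sum_{d=0}^{p^a-1}\frac{q^{\min\{\lfloor d/2\rfloor,\,p^{a-1}\}+1}-1}{q-1}+\frac{q^{\,p^{a-1}+1}-1}{q-1}$, which is the claimed formula.

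The main obstacle I anticipate is purely bookkeeping: matching the canonical forms of \cite[Theorem 3.6]{KLL2008} with the per-type counts of \cite[Corollary 3.9]{KLL2008} and reindexing them so that the totals reassemble into the compact expression above. The delicate points are the placement of the floor and the cap; one must verify the boundary cases (small $d$, and the range $2p^{a-1}\le d\le p^a-1$ where the $\min$ is attained by $p^{a-1}$) to confirm that the exponent is $\min\{\lfloor d/2\rfloor,p^{a-1}\}+1$ throughout and that precisely one family survives as the separate summand. Once this correspondence is checked, the lemma follows immediately by substitution.
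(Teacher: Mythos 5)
Your proposal takes essentially the same route as the paper, whose entire proof of this lemma consists of citing \cite[Theorem 3.6]{KLL2008} for the canonical classification of the ideals of ${\rm GR}(p^2,s)[X]/\langle X^{p^a}-1\rangle$ and \cite[Corollary 3.9]{KLL2008} for the per-type counts, exactly as you outline. One caution about your heuristic glosses: the factor $2$ does not come from separating generators with and without a genuine mixed $p$-term (that split is uneven), but rather from the fact that the discrete types occur in dual pairs under $(T,\tau)\mapsto(p^a-\tau,\,p^a-T)$ acting on the residue/torsion degrees, the self-paired types with $T+\tau=p^a$ being what supplies the isolated summand $\frac{p^{s(p^{a-1}+1)}-1}{p^s-1}$; since you explicitly defer this matching to the bookkeeping stage, your plan is sound and coincides with the paper's.
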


\begin{proposition}[{\cite[Corollary 3.5]{KLL2012}}]\label{NEC}
    The number of Euclidean self-dual cyclic codes of length $2^a$ over ${\rm GR}(2^2,s)$ is
\[NEC({\rm GR}(2^2,s),2^a)=
\begin{cases}
	1 & \text{ if }a=1,\\
	1+2^s & \text{ if }a=2,\\
	1+2^s+2^{2s+1}\left( \frac{{(2^s)}^{(2^{a-2}-1)}-1}{2^s-1}\right)& \text{ if }a\geq 3.
\end{cases}\]
	If $p$ is an odd prime, then the number of Euclidean self-dual cyclic codes of length $p^a$ over ${\rm GR}(p^2,s)$ is
	\[NEC({\rm GR}(p^2,s),p^a)= 2\left( \frac{{(p^s)}^ {(p^{a-1}+1)}/{2}-1}{p^s-1}\right).\]
\end{proposition}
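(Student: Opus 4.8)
The plan is to identify the Euclidean self-dual cyclic codes of length $p^a$ over ${\rm GR}(p^2,s)$ with distinguished ideals of $R := {\rm GR}(p^2,s)[X]/\langle X^{p^a}-1\rangle$ and to count them by first classifying all ideals of $R$ and then carving out the self-dual ones. The starting point is structural: since $p^2 = 0$ and $X^{p^a}-1 \equiv (X-1)^{p^a} \pmod{p}$, the ring $R$ is a finite commutative local ring whose ideal lattice is controlled by the two-step $p$-adic filtration together with the nilpotent $X-1$. I would invoke the classification of these ideals from \cite{KLL2008}, the same classification underlying Lemma~\ref{N-cyclic2}: each cyclic code $C$ is recorded by a pair of torsional degrees $t_1 \le t_0$, namely the degrees of the residue code $\overline{C} = C \bmod p$ and of the torsion code $(C:p)\bmod p$ inside $\mathbb{F}_{p^s}[X]/\langle (X-1)^{p^a}\rangle$, together with an off-diagonal polynomial parameter $g$; in this description $|C| = (p^s)^{2p^a - t_0 - t_1}$.

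The second step is to transport the Euclidean duality to these parameters. For cyclic codes the Euclidean dual is the reciprocal of the annihilator, $C^{\perp_{\rm E}} = \bigl(\mathrm{Ann}(C)\bigr)^{*}$, where $*$ is the involution of $R$ induced by $X \mapsto X^{-1}$; since $*$ respects the filtration, it sends an ideal with parameters $(t_0,t_1,g)$ to one with parameters $(p^a - t_1,\, p^a - t_0,\, g^{*})$, the precise effect on $g$ to be determined. Comparing sizes, $C = C^{\perp_{\rm E}}$ forces $|C| = (p^s)^{p^a}$, i.e. $t_0 + t_1 = p^a$, after which self-duality reduces to this balance condition together with a fixed-point condition on $g$ under the induced involution.

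The enumeration is then the third step. Writing $t_0 = p^a - t_1$ with $t_1 \le p^a/2$, one counts, for each admissible $t_1$, the self-orthogonal choices of the middle parameter $g$; these counts are powers of $p^s$ and assemble into the sums $\frac{(p^s)^N-1}{p^s-1} = 1 + p^s + \dots + (p^s)^{N-1}$ seen in the statement. The parity of $p^a$ drives the case split. For $p$ odd, $p^a$ is odd, so the diagonal case $t_0 = t_1$ never occurs and the count organizes into the single clean expression $2\bigl(\frac{(p^s)^{(p^{a-1}+1)/2}-1}{p^s-1}\bigr)$. For $p = 2$, $p^a$ is even, the central level $t_0 = t_1 = 2^{a-1}$ does occur and contributes an extra family; together with the degenerate behaviour of the small lengths $a = 1$ and $a = 2$ (where there is too little room for a nontrivial $g$), this produces the piecewise formula with the extra $1$ and $1 + 2^s$ terms.

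I expect the main obstacle to be step two: determining exactly how the reciprocal-of-annihilator involution acts on the off-diagonal parameter $g$, since this forces a careful computation with reciprocal polynomials against the chain-ring structure and is precisely where the self-orthogonality constraints on $g$ are generated. The characteristic-two case compounds this difficulty, because $-1 = 1$ collapses several of the symmetries available for odd $p$ and must be handled by a separate, more delicate analysis of the central level and of the small-length cases.
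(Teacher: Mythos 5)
First, a point of calibration: the paper does not prove Proposition~\ref{NEC} at all. It is imported, with citation, as \cite[Corollary 3.5]{KLL2012}, exactly as the total count of cyclic codes (Lemma~\ref{N-cyclic2}) is imported from \cite{KLL2008}; the paper only uses these numbers as inputs to Theorem~\ref{NEHA} and Remark~\ref{remCyclicSylow}. So your proposal cannot be matched against an in-paper argument; it has to stand on its own as a reconstruction of the proof in the cited reference. As such, its skeleton is the right one and agrees with that literature: identifying codes with ideals of the local ring ${\rm GR}(p^2,s)[X]/\langle X^{p^a}-1\rangle$, parametrizing them by residue/torsion degrees $(t_0,t_1)$ plus an off-diagonal parameter, the size formula $(p^s)^{2p^a-t_0-t_1}$, the description of the Euclidean dual as the reciprocal of the annihilator with the swap $(t_0,t_1)\mapsto(p^a-t_1,p^a-t_0)$, and the necessity of the balance condition $t_0+t_1=p^a$ are all correct, as is the parity remark that the diagonal level cannot occur for odd $p$.

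The genuine gap is that the two steps you yourself flag as the obstacles are never carried out, and they carry essentially all of the content of the stated formulas. The ring here is neither a chain ring nor a principal ideal ring (cf.\ Proposition~\ref{PIGR}), so $\mathrm{Ann}(C)$ must be computed from explicit generator pairs, and the induced action on $g$ --- hence the self-orthogonality condition whose solutions you must count --- is left undetermined; without it none of the structural constants can be recovered: the overall factor $2$ for odd $p$, the factor $2^{2s+1}$ for $p=2$, $a\ge 3$, and, crucially, the length of the geometric sums. Note that the odd-$p$ formula is a sum of $(p^{a-1}+1)/2$ powers of $p^s$, not of roughly $p^a/2$ of them: this reflects a cap on the parameter space, already visible in the $\min\{\lfloor d/2\rfloor,\,p^{a-1}\}$ of Lemma~\ref{N-cyclic2}, which your step three ignores, so a naive count of one power of $p^s$ per admissible $t_1$ would produce sums of the wrong length. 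Finally, your explanation of the small $p=2$ cases (``too little room for a nontrivial $g$'') is not what actually happens: already for $a=1$ the balance condition admits the central level $(t_0,t_1)=(1,1)$, realized by the ideals $\langle (X+1)+2c\rangle$, but every such ideal fails self-orthogonality since $\bigl((X+1)+2c\bigr)\bigl((X^{-1}+1)+2c\bigr)=(X+1)^2+4c(X+1)+4c^2=2(X+1)\neq 0$ (here $X^{-1}=X$); the level is killed by the fixed-point condition, not by lack of parameters. This is precisely the kind of computation the proposal defers, and without it the proposition is asserted rather than proved.
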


\begin{proposition}[{\cite[Theorem 3.5]{JLS2013}}] \label{NHC}
Let $p$ be a prime and let $s,a$ be positive integers such that $s$ is even. Then
the number of Hermitian  self-dual cyclic codes of length $p^a$ over ${\rm GR}(p^2,s)$ is
	\[NHC({\rm GR}(p^2,s),p^a)= \sum_{i_1=0}^{p^{a-1}} p^{{si_1}/{2}}= \frac{p^{ {s(p^{a-1}+1)}/{2}}-1}{p^{ {s}/{2}}-1}.\]
\end{proposition}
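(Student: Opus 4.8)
The plan is to reduce the count to the internal ideal structure of the ambient ring $\mathcal{A} := {\rm GR}(p^2,s)[X]/\langle X^{p^a}-1\rangle$, whose ideals are exactly the cyclic codes of length $p^a$, and to mirror the Euclidean computation behind Proposition~\ref{NEC} (from \cite{KLL2012}), replacing the Euclidean form by the Hermitian one. First I would record that $\mathcal{A}$ is a finite local ring whose residue ring is the chain ring $\bar{\mathcal{A}} := \mathbb{F}_{p^s}[X]/\langle (X-1)^{p^a}\rangle$, since $X^{p^a}-1$ reduces to $(X-1)^{p^a}$ modulo $p$; the ideals of $\bar{\mathcal{A}}$ are precisely $\langle (X-1)^i\rangle$ for $0\le i\le p^a$. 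To each cyclic code $C\subseteq\mathcal{A}$ I attach its residue code ${\rm Res}(C)=\langle(X-1)^{i_0}\rangle$ and its torsion code ${\rm Tor}(C)=\langle(X-1)^{i_1}\rangle$, with $0\le i_1\le i_0\le p^a$, so that $|C|=(p^s)^{2p^a-i_0-i_1}$. I would then invoke the classification of such codes from \cite{KLL2008}, which describes $C$ by the pair $(i_0,i_1)$ together with a connecting parameter recording how a lift of the generator $(X-1)^{i_0}$ of the residue code is completed inside $C$.

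The second step is to transport the Hermitian duality to $\mathcal{A}$. Writing $\tau$ for the ring automorphism of $\mathcal{A}$ that acts as $\alpha\mapsto\overline{\alpha}$ on coefficients (the $p^{s/2}$-Frobenius of \eqref{aut}) and sends $X\mapsto X^{-1}$, the Hermitian dual $C^{\perp_{\rm H}}$ is the annihilator of $C$ under the associated form, and $C$ is Hermitian self-dual exactly when $C=C^{\perp_{\rm H}}$. I would show that $\tau$ preserves the filtration by powers of $(X-1)$ and that dualizing interchanges residue and torsion indices, i.e.\ ${\rm Res}(C^{\perp_{\rm H}})=\langle(X-1)^{p^a-i_1}\rangle$ and ${\rm Tor}(C^{\perp_{\rm H}})=\langle(X-1)^{p^a-i_0}\rangle$. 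Comparing sizes, Hermitian self-duality forces $|C|=(p^s)^{p^a}$, hence $i_0+i_1=p^a$; the two index conditions are then mutually consistent, and admissible codes are indexed by the single parameter $i_1$ with $i_0=p^a-i_1$.

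Third, for each admissible $i_1$ I would count the Hermitian self-dual codes sharing these invariants. Using the classification of \cite{KLL2008}, such a code is specified by a connecting polynomial of bounded degree taken modulo the torsion ideal, and the requirement $C=C^{\perp_{\rm H}}$ translates into a $\tau$-conjugate-symmetry condition on this polynomial. Two features must be extracted: first, that the resulting equations are solvable only when $i_1\le p^{a-1}$ — reflecting the position of $p$ in the (non-linearly-ordered) ideal lattice of $\mathcal{A}$ — which fixes the range of summation; and second, that within this range the solution set is an affine $\mathbb{F}_{p^{s/2}}$-space of dimension $i_1$, contributing $p^{si_1/2}$ codes. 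Summing over $i_1$ then collapses to the geometric series
\[
NHC({\rm GR}(p^2,s),p^a)=\sum_{i_1=0}^{p^{a-1}}p^{si_1/2}=\frac{p^{s(p^{a-1}+1)/2}-1}{p^{s/2}-1},
\]
as claimed.

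The main obstacle I anticipate is this third step: pinning down exactly how $\tau$ acts on the connecting parameter, showing that its fixed locus under Hermitian self-duality has $\mathbb{F}_{p^{s/2}}$-dimension precisely $i_1$, and verifying that no self-dual codes survive once $i_1>p^{a-1}$. This demands a careful interaction between the $(X-1)$-adic filtration, the place where $p$ enters that filtration, and the conjugate-symmetry imposed by $\tau$; the even/odd bookkeeping involved is the Hermitian analogue of the case split in Proposition~\ref{NEC}, and it is precisely the point where the half-power $p^{s/2}$ (rather than $p^{s}$) emerges.
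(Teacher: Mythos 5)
You should first know that the paper you are being compared against contains no proof of this statement at all: Proposition~\ref{NHC} is imported verbatim from \cite[Theorem 3.5]{JLS2013}, so the only meaningful benchmark is that source, whose argument does follow the route you sketch --- the classification of ideals of ${\rm GR}(p^2,s)[X]/\langle X^{p^a}-1\rangle$ from \cite{KLL2008} by residue and torsion indices plus a connecting polynomial, transport of the Hermitian form through that classification, and a count of conjugate-symmetric connecting polynomials. Your first two steps are sound: $\mathcal{A}$ is local with $\bar{\mathcal{A}}=\mathbb{F}_{p^s}[X]/\langle (X-1)^{p^a}\rangle$ a chain ring, $|C|=(p^s)^{2p^a-i_0-i_1}$, the automorphism $\tau$ respects the $(X-1)$-adic filtration because $\tau(X-1)=-X^{-1}(X-1)$ is a unit multiple of $X-1$, duality swaps ${\rm Res}$ and ${\rm Tor}$ with complementary exponents, and self-duality forces $i_0+i_1=p^a$.

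The genuine gap is your third step, which you flag as ``the main obstacle'' and then do not carry out; but that step is not a deferrable verification --- it \emph{is} the theorem. Everything before it is bookkeeping shared with the Euclidean count, and neither the summation bound $p^{a-1}$ nor the half-exponent $si_1/2$ comes out of it. Concretely, two things are missing. (a) The bound $i_1\le p^{a-1}$ rests on the identity $(X-1)^{p^a}=p\,u(X)\,(X-1)^{p^{a-1}}$ in $\mathcal{A}$ for a unit $u(X)$ (for $p=2$, $a=1$ this is $(X-1)^2=-2(X-1)$): multiplying a lift $(X-1)^{i_0}+ph(X)$ of the residue generator by $(X-1)^{i_1}$ and reducing into ${\rm Tor}(C)=\langle (X-1)^{i_1}\rangle$ forces $(X-1)^{p^{a-1}}\in\langle (X-1)^{i_1}\rangle$, i.e.\ $i_1\le p^{a-1}$; you gesture at ``the position of $p$ in the ideal lattice'' but never isolate this identity, without which the range of summation is unjustified. (b) The count $p^{si_1/2}$ requires writing the condition $C=C^{\perp_{\rm H}}$ explicitly as an equation $h+\sigma(h)=g_0$ on the $i_1$-dimensional $\mathbb{F}_{p^s}$-space of connecting polynomials modulo $(X-1)^{i_1}$, where $\sigma$ is the semilinear involution built from $\alpha\mapsto\alpha^{p^{s/2}}$ and $X\mapsto X^{-1}$, then proving both that $g_0$ lies in the image of $1+\sigma$ (so the solution set is nonempty for \emph{every} $i_1\le p^{a-1}$) and that $\ker(1+\sigma)$ has $\mathbb{F}_{p^{s/2}}$-dimension exactly $i_1$ (an additive Hilbert~90/trace-surjectivity argument). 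Calling the solution set ``an affine $\mathbb{F}_{p^{s/2}}$-space of dimension $i_1$'' restates the answer rather than derives it. So what you have is a correct plan that matches \cite{JLS2013} in outline, but not yet a proof of the formula.
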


\bigskip
\begin{remark}\label{remCyclicSylow}
For cyclic codes of length $p^a$ over ${\rm GR}(p^2,s)$, the numbers $NC$, $NEC$, and $NHC$ have  already been determined in Lemma~\ref{N-cyclic2}, Proposition~\ref{NEC}, and Proposition~\ref{NHC}, respectively.  Combining these  results and  Theorem~\ref{NEHA},  the numbers $NEA({\rm GR}(p^2,s)[A\oplus C_{p^a}])$ and $NHA({\rm GR}(p^2,s)[A\oplus C_{p^a}])$ are explicitly determined.
\end{remark}

The numbers of Euclidean and Hermitian self-dual cyclic codes of arbitrary length $n$ over ${\rm GR}(p^2,s)$ can be obtained as a corollary of Remark~\ref{remCyclicSylow}. Some parts of the formulas can be simplified as in the next corollary.
\begin{corollary}
	Let $p$ be a prime and let $s,n$ be  positive integers. Write $n=mp^a$, where $a\geq 0$ and $p\nmid m$.
	Then 
	\begin{align*}NEC({\rm GR}(p^2,s),n)=&
	\left(NEC({\rm GR}(p^2,s),p^a)\right)^{\eta(m)}\\
     &\times \prod_{\substack{d\mid m\\ d\not\in\{ 1,2\}}} \left(NHC({\rm GR}(p^2,s\cdot{\rm ord}_d(p^s)),p^a)\right)^{(1-\chi(d,p^s))\frac{\phi(d)}{{\rm ord}_d(p^s)}}  \\
	&\times\prod_{d\mid m} \left(NC({\rm GR}(p^2,s\cdot{\rm ord}_d(p^s)),p^a)\right)^{\chi(d,p^s)\frac{\phi(d)}{2{\rm ord}_d(p^s)}},
	\end{align*}
	where 
	\[\eta(m)=
	\begin{cases}
	 1&\text{ if } m \text{ is odd},\\
	 2&\text{ if } m \text{ is even}.
	\end{cases}\]
	In addition, if $s$ is even, then
	\begin{align*}
		NHC({\rm GR}(p^2,s),n)=
	&\prod_{{d\mid m}} \left(NHC({\rm GR}(p^2,s\cdot{\rm ord}_d(p^s)),p^a)\right)^{(1-\lambda(d,p^\frac{s}{2}))\frac{\phi(d)}{{\rm ord}_d(p^s)}}  \\
	&\times\prod_{d\mid m} \left(NC({\rm GR}(p^2,s\cdot{\rm ord}_d(p^s)),p^a)\right)^{\lambda(d,p^\frac{s}{2})\frac{\phi(d)}{2{\rm ord}_d(p^s)}}.
	\end{align*}
\end{corollary}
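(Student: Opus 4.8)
The plan is to specialize Theorem~\ref{NEHA} to the setting where the ambient group is cyclic. Writing $n=mp^a$ with $p\nmid m$, the cyclic group of order $n$ decomposes as $C_n\cong C_m\oplus C_{p^a}$, where $C_{p^a}$ is its Sylow $p$-subgroup and $A:=C_m$ is the cyclic complement with $\gcd(p,|A|)=1$. First I would apply Theorem~\ref{NEHA} with $r=2$, $A=C_m$, and $P=C_{p^a}$. Since $A$ is cyclic of order $m$, its exponent is $M=m$ and $\mathcal{N}_A(d)=\phi(d)$ for every $d\mid m$; moreover, as $P$ is cyclic, the quantities $NEA$, $NHA$, $NA$ appearing in Theorem~\ref{NEHA} become $NEC$, $NHC$, $NC$, respectively. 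This substitution already yields the Hermitian formula verbatim, so the Hermitian case requires no further argument.

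For the Euclidean formula, two simplifications remain. The crux is the elementary observation that ${\rm ord}_d(p^s)=1$ is equivalent to $d\mid p^s-1$, in which case $(p^s)^t+1\equiv 2\pmod d$ for every $t\geq 1$; hence $(d,p^s)$ is good if and only if $d\mid 2$, i.e. $d\in\{1,2\}$. I would use this to rewrite the exponent of the first factor: the sum $\sum_{d\mid m,\, {\rm ord}_d(p^s)=1}(1-\chi(d,p^s))\phi(d)$ receives a nonzero contribution only from the good divisors $d\in\{1,2\}$, giving $\phi(1)=1$ always and $\phi(2)=1$ precisely when $2\mid m$ (which forces $p$ odd, so that $d=2$ genuinely divides $m$). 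The total is therefore $1$ if $m$ is odd and $2$ if $m$ is even, i.e. exactly $\eta(m)$.

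The second simplification concerns the middle (Hermitian-type) factor, whose index set I would enlarge from $\{d\mid m:{\rm ord}_d(p^s)\neq 1\}$ to $\{d\mid m:d\notin\{1,2\}\}$. These two index sets differ only by divisors $d>2$ with ${\rm ord}_d(p^s)=1$; for each such $d$ we have $d\mid p^s-1$ with $d\nmid 2$, so by the observation above $(d,p^s)$ is bad and $\chi(d,p^s)=1$. Consequently the exponent $(1-\chi(d,p^s))\frac{\phi(d)}{{\rm ord}_d(p^s)}$ vanishes on exactly these extra divisors, so enlarging the index set only introduces trivial factors of $1$ and leaves the product unchanged. The divisors $d=1$ and, when $p$ is odd, $d=2$ already satisfy ${\rm ord}_d(p^s)=1$ and so lie outside both index sets, while the third factor, indexed over all $d\mid m$, is common to both forms and needs no adjustment. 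Assembling these observations recovers the stated Euclidean formula.

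The main obstacle is purely bookkeeping: one must verify that the good/bad dichotomy interacts correctly with the condition ${\rm ord}_d(p^s)=1$, so that the type~$\I$ divisors are confined to $\{1,2\}$ while the remaining divisors with ${\rm ord}_d(p^s)=1$ contribute only to the type~$\III$ (the $NC$) factor. Lemma~\ref{good1} guarantees that these are exactly the type~$\III$ divisors, so no code is miscounted; the arithmetic identity $(p^s)^t+1\equiv 2\pmod d$ for $d\mid p^s-1$ is what pins down the good divisors. No genuinely hard step arises beyond this consistency check.
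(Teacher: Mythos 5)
Your proposal is correct and follows essentially the same route as the paper: both specialize Theorem~\ref{NEHA} with $r=2$, $A$ cyclic of order $m$ (so $M=m$, $\mathcal{N}_A(d)=\phi(d)$) and $P=C_{p^a}$, replacing $NA,NEA,NHA$ by $NC,NEC,NHC$. Your arithmetic identification of the good divisors with ${\rm ord}_d(p^s)=1$ as exactly $d\in\{1,2\}$ is just a divisor-indexed rephrasing of the paper's observation that the type~$\I$ classes are $S_{p^s}(0)$, and additionally $S_{p^s}(m/2)$ when $m$ is even; you are in fact slightly more explicit than the paper about why enlarging the middle product's index set from $\{d:{\rm ord}_d(p^s)\neq 1\}$ to $\{d:d\notin\{1,2\}\}$ only introduces trivial factors.
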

\begin{proof}
Setting $r=2$ and A a cyclic group    of order $m$ in Theorem~\ref{NEHA}, the exponent of $A$ is $m$ and $\mathcal{N}_{A}(d)$ is just $\phi(d)$, where $\phi$ is the Euler's function.

Note that  $S_{p^s}(0)$ is the only $p^s$-cyclotomic class of $A$ of type $\I$ if $m$ is odd, and $S_{p^s}(0)$ and $S_{p^s}(\frac{m}{2})$ are the only $p^s$-cyclotomic classes of $A$ of type $\I$ if $m$ is even. Therefore, the values of  $\eta(m)$ follows.
\end{proof}

\section{Complementary Dual Abelian   Codes in ${\rm GR}(p^r,s)[G]$} \label{sec4p}

In this section, the  characterization and enumeration  of complementary dual abelian codes in the group ring  ${\rm GR}(p^r,s)[G]$ are given  based on the decomposition in Section \ref{sec2} and the theory of local group rings.

\subsection{Characterization and Enumeration of  Complementary Dual Abelian   Codes in ${\rm GR}(p^r,s)[P]$}

In this subsection,     we focus on   complementary dual abelian codes and direct summand ideals    in  each component  of ${\rm GR}(p^r,s)[P]$  in the decompositions \eqref{E1} and \eqref{H1}, where  $P$ is a finite abelian $p$-group.

First, we  recall some useful  definitions and properties  in ring theory.  For  a  finite commutative ring $R$ with identity, the {\em Jacobson radical}  of    $R$, denoted by  $Jac(R)$,   is defined to be the intersection of  all  maximal ideals of $R$.  The ring $R$ is said to be
\textit{local} if it has a unique maximal ideal.  


A local group ring has been  characterized in the following lemma.
\begin{lemma}[{\cite[Theorem]{N1972}}]  \label{lemLocal}Let $R$ be a commutative ring with identity and let $G$ be a finite  abelian group. Then  $R[G]$ is   local if and only if $R$ is local, $G$ is a $p$-group and
    $p\in Jac(R)$. 
\end{lemma}


\begin{proposition} \label{propLocal}
    	Let $p$ be a prime number and let $r,s$ be positive integers. Let $P$ be a finite abelian $p$-group.  Then 
    ${\rm GR}(p^r,s)[P]$  is a local group ring.
\end{proposition}
\begin{proof} Since the ideal $\langle p \rangle $ is the unique maximal ideal of  ${\rm GR}(p^r,s)$, the ring  ${\rm GR}(p^r,s)$ is local.   Moreover,  $p\in \langle p \rangle =Jac( {\rm GR}(p^r,s)$.  By Lemma \ref{lemLocal}, ${\rm GR}(p^r,s)[P]$ is a local group ring.    
     \end{proof}

By Proposition  \ref{propLocal}, ${\rm GR}(p^r,s)[P]$ is local.  Denote by  $M$  the maximal ideal of  ${\rm GR}(p^r,s)[P]$.  The characterizations of the Euclidean and Hermitian  complementary dual abelian codes and  the direct summands in a local group ring  ${\rm GR}(p^r,s)[P]$   are  given in the following theorems.

\begin{theorem} \label{complementary}
    Let $p$ be a prime number and let $r,s$ be positive integers. Let $P$ be a finite abelian $p$-group. 
      Then $\{0\}$ and ${\rm GR}(p^r,s)[P]$ are the only Euclidean complementary  dual abelian codes in  ${\rm GR}(p^r,s)[P]$.
\end{theorem}
\begin{proof}  Clearly, $\{0\}$ and ${\rm GR}(p^r,s)[P]$  are Euclidean  complementary  dual abelian codes in  ${\rm GR}(p^r,s)[P]$. 
    Let $\mathcal{C}$ be an  abelian code in  ${\rm GR}(p^r,s)[P]$  such that  $\{0\}\subsetneq \mathcal{C} \subsetneq {\rm GR}(p^r,s)[P]$. 
    Then $\mathcal{C}\subseteq M$. It follows that  $M^{\perp_{\rm E}} \subseteq \mathcal{C}^{\perp_{\rm E}}  \subseteq M $ which implies  $M^{\perp_{\rm E}}  \subseteq \mathcal{C} \subseteq M$. 
    Hence,  $\{0\}\ne M ^{\perp_{\rm E}}  \subseteq \mathcal{C}\cap \mathcal{C}^{\perp_{\rm E}} \subseteq M$. Consequently, $\mathcal{C}$ is not  Euclidean complementary dual. Therefore, the ideals $\{0\}$ and ${\rm GR}(p^r,s)[P]$ are the only Euclidean complementary  dual abelian codes in  ${\rm GR}(p^r,s)[P]$.
\end{proof}

It is not difficult to see that the proof of Theorem \ref{complementary} is independent of the inner product. Hence, we have the following corollary. 

\begin{corollary} \label{Hcomplementary}
Let $p$ be a prime number and let $r,s$ be positive integers such that $s$ is even. Let $P$ be a finite abelian $p$-group. 
Then $\{0\}$ and ${\rm GR}(p^r,s)[P]$ are the only Hermitian  complementary  dual abelian codes in  ${\rm GR}(p^r,s)[P]$.
\end{corollary}

\begin{theorem}\label{derectsummand}
 Let $p$ be a prime number and let $r,s$ be positive integers. Let $P$ be a finite abelian $p$-group. 
   Then  ideals $\{0\}$ and ${\rm GR}(p^r,s)[P]$ are the only direct summands in  ${\rm GR}(p^r,s)[P]$.
\end{theorem}
\begin{proof}
    Clearly, $\{0\}$ and ${\rm GR}(p^r,s)[P]$  are  direct summands in  ${\rm GR}(p^r,s)[P]$.
 Let $\{0\}\subsetneq \mathcal{C} \subsetneq {\rm GR}(p^r,s)[P]$  be  an  ideal in   ${\rm GR}(p^r,s)[P]$. Suppose that $\mathcal{C}$ is  a direct summand. Then there exist an ideal $\mathcal{C}^\prime $ in  ${\rm GR}(p^r,s)[P]$ such that $\mathcal{C}\cap \mathcal{C}^\prime=\{0\}$ and $\mathcal{C}+\mathcal{C}^\prime = {\rm GR}(p^r,s)[P]$. Since $M$ is the maximal ideal in  ${\rm GR}(p^r,s)[P]$, we have   $\mathcal{C}\subseteq  M$ and $\mathcal{C}^\prime \subseteq  M$.  Hence, $\mathcal{C}+\mathcal{C}^\prime\subseteq M \subsetneq  {\rm GR}(p^r,s)[P]$, a contradiction. Therefore,  the ideals $\{0\}$ and  ${\rm GR}(p^r,s)[P]$ are the only direct summands in   ${\rm GR}(p^r,s)[P]$.
\end{proof}

The  above results can be summarized as follows. 
\begin{corollary}
Let $p$ be a prime number and let $r,s$ be positive integers. Let $P$ be a finite abelian $p$-group.   Then the following statements hold.
    \begin{enumerate}
        \item    The number of Euclidean complementary dual abelian codes in ${\rm GR}(p^r,s)[P]$    is $2$.
        \item    If $s$ is even, the number of Hermitian  complementary dual abelian codes in ${\rm GR}(p^r,s)[P]$   is~$2$. 
          \item    The number of  direct summand ideals in ${\rm GR}(p^r,s)[P]$  is $2$. 
    \end{enumerate}
\end{corollary}

\subsection{Characterization  and Enumeration of Complementary Dual Abelian   Codes in ${\rm GR}(p^r,s)[G]$}

In this subsection,  we focus on the characterization and enumeration of complementary dual abelian codes in ${\rm GR}(p^r,s)[G]$, where $G$ is an arbitrary finite abelian group.  Using the decompositions in Section \ref{sec2} and results in the previous subsection,   the  characterization and enumeration  of such complementary dual codes are given independent of $r$ and the Sylow $p$-subgroup  of $G$.

    Recall that $G\cong A\times P$,  where $P$ is the Sylow $p$-subgroup of $G$ and $A$ is its complement subgroup.  The group ring   ${\rm GR}(p^r,s)[G]$ is viewed as  ${\rm GR}(p^r,s)[G]\cong \mathcal{R}[P]$, where $\mathcal{R}={\rm GR}(p^r,s)[A]$.  Using the decomposition of   ${\rm GR}(p^r,s)[G]$ in \eqref{E2},  the characterization of  a Euclidean complementary  dual abelian code in  ${\rm GR}(p^r,s)[G]$ can be concluded via   \eqref{E3} and \eqref{dualE}  as follows.
    
    \begin{proposition} \label{charLCDE} Let $p$ be a prime number and let $r,s$ be positive integers.   Let  $A$ be  finite abelian group such that $p\nmid |A|$ and let  $P$ be a finite abelian $p$-group. Then an abelian code $\mathcal{C}$ in ${\rm GR}(p^r,s)[A\times P]  $ decomposed as in  \eqref{E3} is Euclidean complementary dual if and only if the following statements hold.
        \begin{enumerate}
            \item  $U_i$ is Euclidean  complementary dual   for all $ i=1,2,\dots, r_\I$.
            \item  $V_j$ is Hermitian  complementary dual   for all $  j=1,2,\dots, r_\II$.
            \item  $W_k\cap (W_k^\prime)^{\perp_{\rm E}}=\{0\}$ and    $W_k^\prime \cap W_k^{\perp_{\rm E}}=\{0\}$  for all $k=1,2,\dots, r_{\III}$.
        \end{enumerate}
    \end{proposition}

    The next corollary follows immediately from  Theorem \ref{complementary}, Proposition \ref{charLCDE}, and  Corollary~\ref{Hcomplementary}.
    
    \begin{corollary} \label{corCharLCDE}
        Let $p$ be a prime number and let $r,s$ be positive integers.   Let  $A$ be  finite abelian group such that $p\nmid |A|$ and let  $P$ be a finite abelian $p$-group. Then an abelian code $C$ in ${\rm GR}(p^r,s)[A\times P]  $ decomposed as in  \eqref{E3} is Euclidean complementary dual if and only if the following statements hold.
        \begin{enumerate}
            \item  $U_i  \in \{ \{0\}, {\rm GR}(p^r,s)[P] \}$  for all $i=1,2,\dots, r_\I$.
            \item  $V_j\in \{\{0\}, {\rm GR}(p^r,2se_j)[P] \}$      for all $ j=1,2,\dots, r_\II$.
            \item  $(W_k, W_k^\prime ) \in \{ (\{0\}, \{0\}), ({\rm GR}(p^r,sf_k)[P], {\rm GR}(p^r,sf_k)[P])\}$  for all $k=1,2,\dots, r_{\III}$.
        \end{enumerate}
    \end{corollary}

    From Corollary \ref{corCharLCDE}, it is not difficult to see that  the number of Euclidean complementary dual abelian codes in ${\rm GR}(p^r,s)[A\times P]  $  is  independent of $r$ and  the group $P$ and it can be  determined in the following corollary. 
    \begin{corollary}\label{corNumLCDE}
         Let $p$ be a prime number and let $r,s$ be positive integers.   Let  $A$ be  finite abelian group such that $p\nmid |A|$ and let  $P$ be a finite abelian $p$-group.   If the exponent of $A$ is $M$ and  ${\rm GR}(p^r,s)[A\times P]  $ is decomposed as in  \eqref{E2}, then the number of Euclidean complementary dual abelian codes in ${\rm GR}(p^r,s)[A\times P]  $ is  \[2^{r_{\I}+r_{\II} +r_\III} = 2^{  \sum\limits_{{d\mid M}}  {(1-\chi(d,p^s))\frac{\mathcal{N}_A(d)}{{\rm ord}_d(p^s)}}    + \sum\limits _{d\mid M}  {\chi(d,p^s)\frac{\mathcal{N}_A(d)}{2{\rm ord}_d(p^s)}}} ,\]
         where $\mathcal{N}_A(d)$  denote the number of elements  in $A$ of order $d$.   
    \end{corollary}
\begin{proof}
    The first part follows from Corollary \ref{corCharLCDE}. The equality can be derived similar to the proof of 
 Theorem \ref{NEHA}.
\end{proof}

       Using the decomposition of   ${\rm GR}(p^r,s)[G]$ in \eqref{H2},  the characterization of  a  Hermitian  complementary  dual abelian code in  ${\rm GR}(p^r,s)[G]$ can be concluded via   \eqref{H3} and \eqref{dualH}  in the following proposition. 
    
    \begin{proposition} \label{HcharLCDE}  Let $p$ be a prime number and let $r,s$ be positive integers such that $s$ is even.   Let  $A$ be  finite abelian group such that $p\nmid |A|$ and let  $P$ be a finite abelian $p$-group. Then an abelian code $\mathcal{C}$ in ${\rm GR}(p^r,s)[A\times P]  $ decomposed as in   \eqref{H3} is Hermitian complementary dual if and only if the following statements hold.
        \begin{enumerate}
            \item  $E_j$ is Hermitian  complementary dual   for all $ j=1,2,\dots, r_\IIp$.
            \item  $F_k\cap (F_k^\prime)^{\perp_{\rm E}}=\{0\}$ and    $F_k^\prime \cap F_k^{\perp_{\rm E}}=\{0\}$  for all $ j=1,2,\dots, r_{\IIIp}$.
        \end{enumerate}
    \end{proposition}

    The following result follows directly from Proposition \ref{HcharLCDE} and  Corollary \ref{Hcomplementary}. 
    
    \begin{corollary} \label{HcorCharLCDE}
        Let $p$ be a prime number and let $r,s$ be positive integers such that $s$ is even.   Let  $A$ be  finite abelian group such that $p\nmid |A|$ and let  $P$ be a finite abelian $p$-group. Then an abelian code $\mathcal{C}$ in ${\rm GR}(p^r,s)[A\times P]  $ decomposed as in   \eqref{H3} is Hermitian complementary dual if  and only if the following statements hold.
        \begin{enumerate}
            \item  $E_j\in \{\{0\},  {\rm GR}(p^r,s\acute{e}_j)[P]\}$      for all $j=1,2,\dots, r_\IIp$.
            \item  $(F_k, F_k^\prime ) \in \{ (\{0\},  \{0\}), ( {\rm GR}(p^r,s\acute{f}_k)[P],{\rm GR}(p^r,s\acute{f}_k)[P])\}$  for all $ k=1,2,\dots, r_{\IIIp}$.
        \end{enumerate}
    \end{corollary}

    From Corollary \ref{HcorCharLCDE}, the number of Hermitian complementary dual abelian codes in ${\rm GR}(p^r,s)[A\times P]  $ is  independent of  $r$ and the group  $P$ and it is given  in the following corollary. 
    \begin{corollary}\label{HcorNumLCDE}
        Let $p$ be a prime number and let $r,s$ be positive integers such that $s$ is even.   Let  $A$ be  finite abelian group such that $p\nmid |A|$ and let  $P$ be a finite abelian $p$-group.   If the exponent of $A$ is $M$ and  ${\rm GR}(p^r,s)[A\times P]  $ is decomposed as in  \eqref{H2},   then the number of Hermitian complementary dual abelian codes in ${\rm GR}(p^r,s)[A\times P] $ is  \[2^{r_{\Ip}+r_{\IIp}}= 2^{  \sum\limits _{{d\mid M}}  {(1-\lambda(d,p^\frac{s}{2}))\frac{\mathcal{N}_A(d)}{{\rm ord}_d(p^s)}} + \sum\limits_{d\mid M}  {\lambda(d,p^\frac{s}{2})\frac{\mathcal{N}_A(d)}{2{\rm ord}_d(p^s)}}  },\]
 where $\mathcal{N}_A(d)$  denote the number of elements  in $A$ of order $d$.   
\end{corollary}
\begin{proof}
The first part follows from Corollary \ref{HcorCharLCDE}. The equality can be derived similar to the proof of 
Theorem \ref{NEHA}.
\end{proof}


\section{Conclusion} \label{sec6}

Self-dual and complementary dual   abelian codes in  ${\rm GR}(p^r,s)[G]$ have been studied with respect to the  Euclidean and Hermitian inner products, a group ring of  a finite abelian group $G$ over a Galois ring ${\rm GR}(p^r,s)$.    We have characterized  such self-dual codes as well as determined  necessary and sufficient conditions for  ${\rm GR}(p^r,s)[G]$ to contain  a   Euclidean (resp, Hermitian) self-dual abelian code. For any finite abelian group $G$ and Galois ring ${\rm GR}(p^r,s)$,  the enumerations of such self-dual codes have been given.
    In the case where    $\gcd(|G|,p)=1$, the enumeration has been completed by restricting the Sylow $p$-subgroup to be $\{0\}$.  Applying some known results on cyclic codes of length $p^a$ over ${\rm GR}(p^2,s)$, we have determined  explicitly  the numbers of Euclidean and Hermitian  self-dual  abelian codes in ${\rm GR}(p^2,s)[G]$  if the Sylow $p$-subgroup of $G$ is cyclic.
As corollaries,  analogous results   on Euclidean and Hermitian  self-dual cyclic  codes    over ${\rm GR}(p^r,s)$  have been concluded. Subsequently, the characterization and  enumeration of complementary dual abelian codes   in ${\rm GR}(p^r,s)[G]$  have been  given. The number of  complementary dual abelian codes   in ${\rm GR}(p^r,s)[G]$ has been  shown to be independent of $r$ and the Sylow $p$-subgroup of $G$.

It would be interesting to study the unknown terms in Theorem \ref{NEHA} and extend the results to abelian codes over finite chain rings or the case where the Sylow $p$-subgroup of the group is not cyclic.


\section*{Appendix A}
\renewcommand{\thesection}{A}
In this appendix, we discuss   the Euclidean and Hermitian duals of abelian codes in  ${\rm GR}(p^r,s)[G]$.  First, we recall that $G\cong A\times P$, where $P$ is the Sylow $p$-subgroup of $G$ and $A$ is a complementary subgroup of $P$ in $G$.   The group ring $\mathcal{R}:={\rm GR}(p^r,s)[A]$ is decomposed as in (\ref{E1}) or (\ref{H1}), and ${\rm GR}(p^r,s)[G]\cong \mathcal{R}[P]$.
\subsection{Euclidean Duality}\label{SSAEC}

Let $\uppsi$ denote the isomorphism in (\ref{E1}). For each  element $\boldsymbol{x}\in \mathcal{R}$, we  can  write
	\begin{align}\label{xx1}
		\uppsi(\boldsymbol{x})=(x_1,\dots, x_{r_{\I}},  y_1,\dots, y_{r_{\II}},     z_1,z_1^\prime,\dots, z_{r_{\III}} , z_{r_{\III}}^\prime ),
	\end{align}	
	where $x_i\in  {\rm GR}(p^r,s)$, $y_j\in {\rm GR}(p^r,2se_j)$,  and $z_k,z_k^\prime\in {\rm GR}(p^r,sf_k)$ for all $i=1,2,\dots,r_{\I}$, $j=1,2,\dots,r_{\II}$, and  $k=1,2,\dots,r_{\III}$.
	 
We are going to view $\widehat{\boldsymbol{x}}$ defined in Section~\ref{sec2}  in terms of  (\ref{xx1}). We note that, for $\boldsymbol{c}=\sum_{a\in A} c_{a}Y^a \in {\rm GR}(p^2,s)[A]$,  we have 
\[\widehat{\boldsymbol{c}}=\sum_{a\in A} c_{a}Y^{-a}=  \sum_{a\in A} c_{-a}Y^a.\] 
Then $\Breve{\widehat{\boldsymbol{c}}}= \sum_{a\in A} \Breve{d}_{a}Y^{a},$ where
\[\Breve{d}_{a}=\sum_{h\in A}  {c}_{-h} \zeta^{\gamma_{a}(h)} . \]

From (\ref{ca}),  we can see that,  if $S_{p^s}(h)$ is of type $\I$, then
\begin{align}\label{mark1} \Breve{d}_{h}=\Breve{c}_{h}, \end{align}
 and  if $S_{p^s}(h)$ is of type $\II$ with cardinality $2\nu$,  then $-h=p^{s\nu}\cdot h$ by Remark \ref{rem1}. It follows that   
\begin{align}\label{mark2}
\Breve{d}_{h}=\sum_{a\in A} {c}_{-a} \zeta^{\gamma_{h}(a)}=\sum_{a\in A} {c}_{a} \zeta^{\gamma_{-h}(a)}=\sum_{a\in A} {c}_{a} \zeta^{\gamma_{p^{s\nu}\cdot h}(a)}=\sum_{a\in A} {c}_{a} \left(\zeta^{\gamma_h(a)}\right)^{p^{s\nu}} =\theta({\Breve{c}_{h}})  ,
\end{align}
where $\theta({\alpha})=\alpha_0^{p^{s\nu}}+\alpha_1^{p^{s\nu}}p+\dots+\alpha_{r-1}^{p^{s\nu}}p^{r-1}$
for all  $\alpha=\alpha_0+\alpha_1p+\dots+\alpha_{r-1}p^{r-1}$.

Therefore, by the isomorphism $\uppsi$ (see also \cite{KR2003}),   the following properties are obtained. 
\begin{enumerate}
	\item  From (\ref{mark1}), the involution $\widehat{~}$ induces the identity automorphism on ${\rm GR}(p^r,s)$.
	\item  From (\ref{mark2}), the involution $\widehat{~}$ induces the ring automorphism $\bar{~}$ on ${\rm GR}(p^r,2se_j)$  as defined in (\ref{aut}), $i.e.$,   	
		\begin{align*}
			\overline{\alpha}=\alpha_0^{p^{s{e}_j}}+\alpha_1^{p^{s{e}_j}}p+\dots+\alpha_{r-1}^{p^{s{e}_j}}p^{r-1}
		\end{align*}
			for all  $\alpha=\alpha_0+\alpha_1p+\dots+\alpha_{r-1}p^{r-1}$ in 		  ${\rm GR}(p^r,2s{e}_j)$,  
				where $\alpha_i\in \mathcal{T}_{2s{e}_j}$ for all $i=0,1,\dots,r-1$.  

	\item  For each pair $(z, z^\prime)\in {\rm GR}(p^r,sf_k)\times {\rm GR}(p^r,sf_k)$,    we have
	\begin{align*}
	\widehat{\uppsi^{-1}(z,z^\prime)}
	 =\uppsi^{-1}(z^\prime,z).		
	\end{align*}
\end{enumerate}

From the discussion,   we have
 	\begin{align*}
	\uppsi(\widehat{\boldsymbol{x}})=(x_1,\dots, x_{r_{\I}},\overline{y_1},\dots, \overline{y_{r_{\II}}}, z_1^\prime, {z_1},\dots, z_{r_{\III}}^\prime ,{z_{r_{\III}}} ),
	\end{align*}
where $\bar{~}$ is induced as above in an appropriate Galois extension.

\begin{proposition}\label{orthogonality1}
	Let  $\boldsymbol{x}=\sum_{b\in P}\boldsymbol{x}_b Y^b $ and $\boldsymbol{u}=\sum_{b\in P}\boldsymbol{u}_b Y^b$ be elements in $\mathcal{R}[P]$.	
Decomposing $\boldsymbol{x}_b, \boldsymbol{u}_b$  using (\ref{xx1}),  	we have
	\begin{align*}
		\uppsi(\boldsymbol{x}_b)=(x_{b,1},\dots, x_{b,r_{\I}},y_{b,1},\dots, y_{b,r_{\II}},  z_{b,1},z_{b,1}^\prime,\dots, z_{b,r_{\III}} , z_{b,r_{\III}}^\prime  )
	\end{align*}
	and
	\begin{align*}
		\uppsi(\boldsymbol{u}_b)=(u_{b,1},\dots, u_{b,r_{\I}},v_{b,1},\dots, v_{b,r_{\II}}, w_{b,1},w_{b,1}^\prime,\dots, w_{b,r_{\III}} , w_{b,r_{\III}}^\prime  ).
	\end{align*}
Then 
\begin{align*}
	\uppsi(\langle 	\boldsymbol{x},	\boldsymbol{u}\rangle\displaystyle\hspace*{-0.4ex}{~ \atop \widehat{~} })
    &=\uppsi\left(\sum_{b\in P}\boldsymbol{x}_b\widehat{\boldsymbol{u}_b}\right)=\sum_{b\in P}\uppsi(\boldsymbol{x}_b)\uppsi(\widehat{\boldsymbol{u}_b})\\
    &=\left(\sum_{b\in P}x_{b,1}{u_{b,1}} ,\dots,\sum_{b\in P}x_{b,r_{\I}}{u_{b,r_{\I}}},\sum_{b\in P}y_{b,1}\overline{v_{b,1}} , \dots,\sum_{b\in P}y_{b,r_{\II}}\overline{v_{b,r_{\II}}} , \right.\\
	&~~~~\left. \sum_{b\in P}z_{b,1}{w_{b,1}^\prime},\sum_{b\in P} z_{b,1}^\prime{w_{b,1}},  \dots,\sum_{b\in P}z_{b,r_{\III}}{w_{b,r_{\III}}^\prime}, \sum_{b\in P} z_{b,r_{\III}}^\prime{w_{b,r_{\III}}}\right).                                      
\end{align*}

In particular, $\langle 	\boldsymbol{x},	\boldsymbol{u}\rangle\displaystyle\hspace*{-0.4ex}{~ \atop \widehat{~} }=0$ if and only if $	\uppsi(\langle 	\boldsymbol{x},	\boldsymbol{u}\rangle\displaystyle\hspace*{-0.4ex}{~ \atop \widehat{~} })=\boldsymbol{0}$, or equivalently,
\[\sum_{b\in P} x_{b,j}{u_{b,j}}=0 \text{ for all } j=1,2,\dots, r_{\I} ,~~~ \sum_{b\in P} y_{b,j}\widetilde{v_{b,j}}=0 \text{ for all } j=1,2,\dots, r_{\II}, \]
and 
\[\sum_{b\in P} z_{b,j}{w_{b,j}^\prime}=0=\sum_{b\in P}z_{b,j}^\prime{w_{b,j}} \text{ for all } j=1,2,\dots, r_{\III}. \]
\end{proposition}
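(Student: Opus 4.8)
The plan is to reduce the evaluation of the $\widehat{~}$-form to a coordinate-wise computation in the product decomposition~(\ref{E1}), exploiting that $\uppsi$ is a ring isomorphism. First I would record that, since the form is $\sum_{b\in P}\boldsymbol{x}_b\widehat{\boldsymbol{u}_b}$ and $\uppsi$ respects both addition and multiplication,
\[
\uppsi(\langle \boldsymbol{x},\boldsymbol{u}\rangle\displaystyle\hspace*{-0.4ex}{~ \atop \widehat{~} })=\uppsi\Big(\sum_{b\in P}\boldsymbol{x}_b\widehat{\boldsymbol{u}_b}\Big)=\sum_{b\in P}\uppsi(\boldsymbol{x}_b)\,\uppsi(\widehat{\boldsymbol{u}_b}).
\]
Thus the whole identity will follow once $\uppsi(\widehat{\boldsymbol{u}_b})$ is known and the product in the right-hand ring of~(\ref{E1}) is taken coordinate by coordinate.

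Second, I would substitute the description of the image $\uppsi(\widehat{\boldsymbol{u}_b})$ obtained in the discussion preceding this proposition, namely that $\widehat{~}$ induces the identity on each type~$\I$ coordinate, the automorphism $\overline{~}$ of~(\ref{aut}) on each type~$\II$ coordinate, and the interchange of the two coordinates of each type~$\III$ pair; explicitly,
\[
\uppsi(\widehat{\boldsymbol{u}_b})=(u_{b,1},\dots, u_{b,r_{\I}},\overline{v_{b,1}},\dots, \overline{v_{b,r_{\II}}}, w_{b,1}^\prime, w_{b,1},\dots, w_{b,r_{\III}}^\prime, w_{b,r_{\III}}).
\]
Multiplying this tuple coordinate-wise against $\uppsi(\boldsymbol{x}_b)$ and summing over $b\in P$ then reproduces the displayed tuple: the type~$\I$ block pairs $x_{b,i}$ with $u_{b,i}$, the type~$\II$ block pairs $y_{b,j}$ with $\overline{v_{b,j}}$, and the coordinate swap in the type~$\III$ block is exactly what produces the cross terms $\sum_{b}z_{b,k}w_{b,k}^\prime$ and $\sum_{b}z_{b,k}^\prime w_{b,k}$.

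For the final ``in particular'' assertion, I would use injectivity of $\uppsi$ to pass from $\langle \boldsymbol{x},\boldsymbol{u}\rangle\displaystyle\hspace*{-0.4ex}{~ \atop \widehat{~} }=0$ to $\uppsi(\langle \boldsymbol{x},\boldsymbol{u}\rangle\displaystyle\hspace*{-0.4ex}{~ \atop \widehat{~} })=\boldsymbol{0}$, and then the fact that an element of a finite direct product of rings is zero exactly when each of its coordinates vanishes. Reading off the coordinates of the tuple computed above immediately yields the three stated families of scalar equations.

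I expect no genuine obstacle: the substantive input, namely the behaviour of $\widehat{~}$ under $\uppsi$, has already been isolated through~(\ref{mark1}) and~(\ref{mark2}). The only step demanding care is the bookkeeping of the type~$\III$ block, where the interchange of coordinates induced by $\widehat{~}$ must be tracked so that $z_{b,k}$ is paired with $w_{b,k}^\prime$ and $z_{b,k}^\prime$ with $w_{b,k}$, rather than along the diagonal; getting this pairing right is what later makes the orthogonality conditions couple $W_k$ with $W_k^\prime$ in~(\ref{dualE}).
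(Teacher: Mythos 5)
Your proposal is correct and follows essentially the same route as the paper: the paper also passes the form through the ring isomorphism $\uppsi$, substitutes the already-established action of $\widehat{~}$ on the coordinates (identity on type $\I$, the automorphism $\bar{~}$ of~(\ref{aut}) on type $\II$, and the coordinate swap on type $\III$ pairs, via (\ref{mark1}) and (\ref{mark2})), and then reads off the componentwise vanishing conditions. Your bookkeeping of the type $\III$ swap, pairing $z_{b,k}$ with $w_{b,k}^\prime$ and $z_{b,k}^\prime$ with $w_{b,k}$, matches the paper exactly.
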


Using  the orthogonality in Proposition~\ref{orthogonality1}, the Euclidean dual of $\mathcal{C}$ in (\ref{E3}) can be viewed to be of the form
\begin{align} 
	\mathcal{C}^{\perp_{\rm E}}\cong \left(\prod_{i=1}^{t_{\I}} U_i^{\perp_{\rm E}}  \right)&\times \left(\prod_{j=1}^{t_{\II}} V_j ^{\perp_{\rm H}} \right)\times \left(\prod_{k=1}^{t_{\III}} \left( (W_k^\prime) ^{\perp_{\rm E}}\times  W_k^{\perp_{\rm E}}\right) \right).
\end{align}

\subsection{Hermitian Duality}\label{SSAHC}

Let $\psi$ denote the isomorphism in (\ref{H1}). Then each element $\boldsymbol{x}\in \mathcal{R}$, we  can   write
	\begin{align}\label{xx}
		\psi(\boldsymbol{x})=(x_1,\dots, x_{t_{\IIp}},  y_1,y_1^\prime,\dots, y_{t_{\IIIp}} , y_{t_{\IIIp}}^\prime ),
	\end{align}	
	where $x_j\in {\rm GR}(p^r,s\acute{e}_j)$ and  $y_k, y_k^\prime\in {\rm GR}(p^r,s\acute{f}_k)$ for all $j=1,2,\dots,t_{\IIp}$ and  $k=1,2,\dots,t_{\IIIp}$.

We note that, for $\boldsymbol{c}=\sum_{a\in A} c_{a}Y^a \in {\rm GR}(p^r,s)[A]$,  we have 
	\[\widetilde{\boldsymbol{c}}=\sum_{a\in A} \overline{c_{a}}Y^{-a}=  \sum_{a\in A} \overline{c_{-a}}Y^a,\] 
	where $\overline{\alpha_0+p\alpha_1+\dots+p^{r-1}\alpha_{r-1}}=\alpha_0^{p^{{s}/{2}}}+p\alpha_1^{p^ {{s}/{2}}} +\dots+ p^{r-1}\alpha_{r-1}^{p^ {{s}/{2}}}$.
	Then $\Breve{\widetilde{\boldsymbol{c}}}= \sum_{a\in A} \Breve{w}_{a}Y^{a},$ where
	$\Breve{w}_{a}=\sum_{h\in A}  \overline{{c}_{-h}} \zeta^{\gamma_{a}(h)} $. 
	 
 From (\ref{ca}),    if $S_{p^s}(h)$ is of type $\IIp$  with cardinality $\nu$,  then $-a=p^{{s\nu}/{2}}\cdot a$ by Remark \ref{rem1}. Since $\nu$ is odd, we have 
\begin{align}\label{mark3}	
\Breve{w}_{h} 
=\sum_{a\in A} \overline{{c}_{-a}} \zeta^{\gamma_{h}(a)}=\sum_{a\in A} \overline{{c}_{a}} \zeta^{\gamma_{-h}(a)}=\sum_{a\in A} \overline{{c}_{a}}\zeta^{\gamma_{p^{{{s\nu}/{2}}}\cdot h}(a)}=\sum_{a\in A} \overline{{c}_{a}}\left(\zeta^{\gamma_{h}(a)}\right)^{p^{{{s\nu}/{2}}}}=\theta({\Breve{c}_{h}}),
\end{align}
where $\theta({\alpha})=\alpha_0^{p^{{s\nu}/{2}}}+\alpha_1^{p^{{s\nu}/{2}}}p
+\dots+\alpha_{r-1}^{p^{{s\nu}/{2}}}p^{r-1}$ 
for all  $\alpha=\alpha_0+\alpha_1p+\dots+\alpha_{r-1}p^{r-1}$.%

By the isomorphism $\psi$ (see also \cite{KR2003}),  we have the following properties. 
\begin{enumerate}
\item By (\ref{mark3}), the involution $\widetilde{~}$ induces the ring automorphism $\bar{~}$ on ${\rm GR}(p^r,s\acute{e}_j)$  as defined in (\ref{aut}), $i.e.$,   	
	\begin{align*}
		\overline{\alpha}=\alpha_0^{p^{{s\acute{e}_j}/{2}}}+\alpha_1^{p^{{s\acute{e}_j}/{2}}}p+\dots+\alpha_{r-1}^{p^{{s\acute{e}_j}/{2}}}p^{r-1}
	\end{align*}
		for all  $\alpha=\alpha_0+\alpha_1p+\dots+\alpha_{r-1}p^{r-1}$ in 		  ${\rm GR}(p^r,s\acute{e}_j)$,  
			where $a_i\in \mathcal{T}_{s\acute{e}_j}$ for all $i=0,1,\dots,r-1$.  
		 
\item  For each pair $(z, z^\prime)\in {\rm GR}(p^r,s\acute{f}_k)\times {\rm GR}(p^r,s\acute{f}_k)$,  we have 
\begin{align*}
\widetilde{\psi^{-1}(z,z^\prime)}
 =\psi^{-1}(z^\prime,z).
\end{align*}
\end{enumerate}

Hence,     $\widetilde{\boldsymbol{x}}$ defined in Section~\ref{sec2}  can be viewed in terms of  (\ref{xx}) as 
 	\begin{align*}
 	\psi(\widetilde{\boldsymbol{x}})=(\overline{x_1},\dots, \overline{x_{t_{\IIp}}},  {y_1^\prime}, {y_1},\dots,{y_{r_{\II}}^\prime} ,{y_{t_{\IIIp}}} ).
	\end{align*}
where $\bar{~}$ is induced as above in an appropriate Galois extension.

\begin{proposition}\label{orthogonality}
	Let  $\boldsymbol{x}=\sum_{b\in P}\boldsymbol{x}_b Y^b $ and $\boldsymbol{u}=\sum_{b\in P}\boldsymbol{u}_b Y^b$ be elements in $\mathcal{R}[P]$.	
Decomposing $\boldsymbol{x}_b, \boldsymbol{u}_b$  using (\ref{xx}),  we have
\begin{align*}
	\psi(\boldsymbol{x}_b)=(x_{b,1},\dots, x_{b,t_{\IIp}}, y_{b,1},y_{b,1}^\prime,\dots, y_{b,t_{\IIIp}} , y_{b,t_{\IIIp}}^\prime  )
\end{align*}
and
\begin{align*}
	\psi(\boldsymbol{u}_b)=(u_{b,1},\dots, u_{b,t_{\IIp}},v_{b,1},v_{b,1}^\prime,\dots, v_{b,t_{\IIIp}} , v_{b,t_{\IIIp}}^\prime  ).
\end{align*} 
Then 
\begin{align*}
	\psi(\langle 	\boldsymbol{x},	\boldsymbol{u}\rangle_{\sim})
    &=\psi\left(\sum_{b\in P}\boldsymbol{x}_b\widetilde{\boldsymbol{u}_b}\right)=\sum_{b\in P}\psi(\boldsymbol{x}_b)\psi(\widetilde{\boldsymbol{u}_b})\\
	&=\left(\sum_{b\in P}x_{b,1}\overline{u_{b,1}} ,\dots,\sum_{b\in P}x_{b,t_{\IIp}}\overline{u_{b,t_{\IIp}}} , \sum_{b\in P}y_{b,1} {v_{b,1}^\prime},\sum_{b\in P} y_{b,1}^\prime {v_{b,1}},  \dots,\sum_{b\in P}y_{b,t_{\IIIp}} {v_{b,t_{\IIIp}}^\prime}, \sum_{b\in P} y_{b,t_{\IIIp}}^\prime {v_{b,t_{\IIIp}}}\right).                                         
\end{align*}
In particular, $\langle 	\boldsymbol{x},	\boldsymbol{u}\rangle_{\sim}=0$ if and only if $	\psi(\langle 	\boldsymbol{x},	\boldsymbol{u}\rangle_{\sim})=\boldsymbol{0}$, or equivalently,
\[\sum_{b\in P} x_{b,j}\overline{u_{b,j}}=0 \text{ for all } j=1,2,\dots, t_{\IIp} ~~\text{
and }~~\sum_{b\in P} y_{b,k} {v_{b,k}^\prime}=0=\sum_{b\in P}y_{b,k}^\prime {v_{b,k}} \text{ for all } k=1,2,\dots, t_{\IIIp}. \]
\end{proposition}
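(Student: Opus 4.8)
The plan is to exploit that $\psi$ is the ring isomorphism from (\ref{H1}) and to push the entire computation onto the right-hand side of (\ref{H1}), a finite direct product of Galois rings in which multiplication is carried out coordinatewise. The only non-formal ingredient is the description of how the involution $\widetilde{~}$ looks after applying $\psi$, namely the formula $\psi(\widetilde{\boldsymbol{x}})=(\overline{x_1},\dots,\overline{x_{t_{\IIp}}},y_1^\prime,y_1,\dots,y_{t_{\IIIp}}^\prime,y_{t_{\IIIp}})$ established just above from (\ref{mark3}) and the two induced-automorphism properties recorded there. Granting that formula, the proposition is essentially bookkeeping.

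First I would write the $\sim$-form out by definition as $\langle\boldsymbol{x},\boldsymbol{u}\rangle_{\sim}=\sum_{b\in P}\boldsymbol{x}_b\widetilde{\boldsymbol{u}_b}\in\mathcal{R}$ and apply $\psi$. Since $\psi$ is additive and multiplicative, $\psi(\langle\boldsymbol{x},\boldsymbol{u}\rangle_{\sim})=\sum_{b\in P}\psi(\boldsymbol{x}_b)\,\psi(\widetilde{\boldsymbol{u}_b})$, where the products are taken entrywise in $\prod_j{\rm GR}(p^r,s\acute{e}_j)\times\prod_k\left({\rm GR}(p^r,s\acute{f}_k)\times{\rm GR}(p^r,s\acute{f}_k)\right)$. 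Next I would substitute the decompositions of $\psi(\boldsymbol{x}_b)$ and $\psi(\boldsymbol{u}_b)$ from the statement, together with the formula for $\psi(\widetilde{\boldsymbol{u}_b})$: on each type $\IIp$ slot the second factor becomes $\overline{u_{b,j}}$, and on each type $\IIIp$ slot the ordered pair $(v_{b,k},v_{b,k}^\prime)$ becomes the swapped pair $(v_{b,k}^\prime,v_{b,k})$.

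Multiplying entrywise and summing over $b$ then yields exactly the claimed coordinates: each type $\IIp$ slot contributes $\sum_{b}x_{b,j}\overline{u_{b,j}}$, while each type $\IIIp$ slot contributes the pair $\left(\sum_b y_{b,k}v_{b,k}^\prime,\ \sum_b y_{b,k}^\prime v_{b,k}\right)$, the cross-pairing being forced by the swap. Finally, because $\psi$ is an isomorphism it is injective, so $\langle\boldsymbol{x},\boldsymbol{u}\rangle_{\sim}=0$ if and only if $\psi(\langle\boldsymbol{x},\boldsymbol{u}\rangle_{\sim})=\boldsymbol{0}$; and an element of a finite direct product of rings is zero if and only if every coordinate is zero, so reading off the coordinates gives the stated system of equations.

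The only point needing care is the type $\IIIp$ behaviour: the involution $\widetilde{~}$ does not act as a Galois automorphism on the paired components but as a transposition of the two copies of ${\rm GR}(p^r,s\acute{f}_k)$, so one must pair $y_{b,k}$ with $v_{b,k}^\prime$ and $y_{b,k}^\prime$ with $v_{b,k}$ rather than pairing like indices. This is immediate once the formula for $\psi(\widetilde{\boldsymbol{x}})$ is in hand, so no genuine difficulty remains; the substantive work lay in deriving that formula from (\ref{mark3}), and the present argument is the exact analogue of the Euclidean computation in Proposition~\ref{orthogonality1}, with $\overline{~}$ replacing the identity on the diagonal-type slots.
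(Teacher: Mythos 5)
Your proposal is correct and is essentially the paper's own argument: the paper likewise treats the substantive step as the derivation of $\psi(\widetilde{\boldsymbol{x}})=(\overline{x_1},\dots,\overline{x_{t_{\IIp}}},y_1^\prime,y_1,\dots,y_{t_{\IIIp}}^\prime,y_{t_{\IIIp}})$ from (\ref{mark3}) and the component swap on type $\IIIp$ pairs, after which the proposition follows by applying the ring isomorphism $\psi$ to $\sum_{b\in P}\boldsymbol{x}_b\widetilde{\boldsymbol{u}_b}$ and computing coordinatewise. Your cross-pairing of $y_{b,k}$ with $v_{b,k}^\prime$ on the type $\IIIp$ slots and the injectivity argument for the vanishing criterion match the paper's treatment exactly.
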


Using  the orthogonality in Proposition~\ref{orthogonality}, the Hermitian dual of $\mathcal{C}$ in (\ref{H3}) can be viewed of the form

\begin{align} 
	\mathcal{C}^{\perp_{\rm H}}\cong  \left(\prod_{j=1}^{t_{\IIp}} E_j ^{\perp_{\rm H}} \right)\times \left(\prod_{k=1}^{t_{\IIIp}} \left( (F_k^\prime) ^{\perp_{\rm E}}\times  F_k^{\perp_{\rm E}}\right) \right).
\end{align}

\section*{Acknowledgments} 
S. Jitman was supported   by the Thailand Research Fund and the Office of Higher Education Commission  of Thailand under Research
Grant MRG6080012. 


\end{document}